\newcommand{\Reals}{\mathbb{R}}
\newcommand{\Rd}{\mathbb{R}^{d}}
\newcommand{\sset}[2]{\{ #1\mid #2 \}}
\newcommand{\eukl}[2]{\langle #1,#2\rangle}
\newcommand{\supn}[1]{\|#1\|_{\infty}}
\newcommand{\expv}[1]{\mathbb{E}\left[#1\right]}
\newcommand{\eexpv}[2]{\mathbb{E}_{#1}\left[#2\right]}
\newcommand{\Hflow}[2]{\varphi_{#1}(#2)}
\newcommand{\Rdd}{\mathbb{R}^{2d}}
\newcommand{\nd}[2]{\mathcal{N}(#1,#2)}
\newcommand{\lref}{\lambda_{\text{ref}}}
\begin{document}


\section{Introduction}

Markov chain Monte Carlo (MCMC) is a widely-applied class of algorithms which use Markov chains to sample from a probability distribution. For example, they are used for Bayesian inference in machine learning \cite{SMCMIP,article,Freitas2000SequentialMC,Ghahramaniarticle, MCMCRevolution} and play an important role in the simulation of molecular dynamics and statistical physics \cite{APSTPH,MDsimulation,doi:10.1063/1.4929529, Nishikawa_2016}.\\ 
Recently, a new class of MCMC algorithms came up  \cite{Monmarch,PDMCMC,BPSwork} which are based on so-called Piecewise-deterministic Markov Processes (PDMPs) \cite{DAVIS}. Their common idea is to construct a time-continuous Markov process which evolves deterministically up to a certain time  but is interrupted at random times by random jumps. Similiar to classical MCMC algorithms, we can use PDMPs to sample from a target distribution by designing the deterministic dynamics and the jump mechanism in a way such that the target distribution is an invariant distribution of the PDMP. Among those recently proposed piecewise-deterministic MCMC schemes are the Bouncy Particle Sampler (BPS) \cite{BPSwork}, the Zig-Zag process \cite{bierkens2016zigzag}, the Randomized Hamiltonian Monte Carlo method (RHMC)\cite{RHMCintro}, and their variations \cite{wu2017generalized,PDMCMC}.\\ 
Despite their empirical success 	\cite{bierkens2016zigzag,PDMCMC}, these algorithms and their corresponding PDMPs are not well understood yet. As there is no closed form of the Markov semi-group $P_t$, analysis of these algorithms is mainly based on their generator $\mathcal{L}$ \cite{DAVIS}. However, the functions $f:\Rd\to\Reals$ of the domain $\text{dom}(\mathcal{L})$ of $\mathcal{L}$ often lack properties such as differentiability, boundedness or compact support. This makes mathematical analysis hard and technical. Even to give a rigorous proof of the invariance of the target distribution under the Markov process  - a fundamental property for their applicability -  can often be laborious \cite{RHMCintro,Monmarch}.\\
The goal of this work is to help out here: we give sufficient conditions for the PDMP to be a Feller process and for the generator $\mathcal{L}$ to admit the space $C_c^\infty(\Rd)$  of compactly supported, infinitely differentiable functions as a core. This simplifies analysis significantly. For example, it follows that infinitesimal invariance of a probability measure on $C_c^\infty(\Rd)$ immediately implies invariance and we can equivalently characterize a PDMP process via martingale problems on $C_c^\infty(\Rd)$. With this work, we therefore hope to provide a useful tool for other researchers to make further progress in the rigorous analysis of these algorithms and their stochastic processes.\\
This work is structured as follows. In \cref{sec:definition}, we define PDMPs on $\Rd$, the basic concept of this work. In \cref{sec:Jabobi}, we prove a new inequality for the "Jacobian" of PDMPs, which provides the basis for the following sections. In \cref{sec:Feller}, we prove that PDMPs are Feller under assumptions typically satisfied in MCMC settings. In \cref{sec:main_result}, we combine these results to the main theorem of this work identifying $C_c^\infty(\Rd)$ as a core of PDMPs under reasonable assumptions. In \cref{sec:examples}, we show how our result can be applied to popular MCMC schemes. Finally, we show in \cref{sec:applications} how our results can be applied in the analysis of MCMC algorithms.

\section{Piecewise-deterministic Markov processes}
\label{sec:definition}
We start by giving a definition of a Piecewise-deterministic Markov process (PDMP) on $\Rd$ \cite{DAVIS}. The deterministic dynamics of a PDMP are given by the ordinary differential equation (ODE)
\begin{align}
	\label{ODE}
	\frac{d}{dt} x(t)=g(x(t)), \quad x(0)=z\in\Rd,
\end{align}
where $g\in C^{1}(\Rd,\Rd)$ is a vector field. We impose the following assumption on $g$:
\begin{assumption}[Lipschitz continuity]
	\label{Lipschitz}
	The function $g\in C^{1}(\Rd,\Rd)$ is Lipschitz continuous, i.e. there exists a $L>0$ such 
	that
	\begin{align*}
		&\|g(x)-g(y)\|\leq L \|x-y\| \quad \text{for all } x,y\in\Rd,\\
		\Leftrightarrow &\|Dg(z)\|\leq L \quad \text{for all } z\in\Rd,
	\end{align*}
where $\|\cdot\|$ denotes the operator norm with respect to the Euclidean norm.
\end{assumption}

By assumption \ref{Lipschitz}, the ODE 
is described by a Lipschitz continuous $C^{1}$-vector field. By standard results about ODEs \cite[theorem 2.5.6, theorem 7.3.1]{ODEs}, uniqueness of the solution holds and there is a function $\varphi\in C^{1}(\mathbb{R}^{d+1},\Rd)$, $(t,z)\mapsto \Hflow{t}{z}$, such that
$t\mapsto\Hflow{t}{z}$ solves \cref{ODE} for all $z\in\Rd$. Then $(\varphi_{t})_{t\in\Reals}$ is a group of $C^{1}$-diffeomorphisms on $\Rd$.\\
In a PDMP, the deterministic dynamics are interrupted by random jumps at random times. To describe the random jumps,  we are given a probability space $(S, \mathcal{S},\Xi)$ and a function $R:\Rd\times S\to\Rd,(z,\xi)\mapsto R^\xi(z)$. For a fixed $\xi\in S$, $R^\xi:\Rd\to\Rd$ describes a "jump map" and $\xi\sim\Xi$ describes the random sample of such a jump map.\\
The random jump times are given by an inhomogeneous Poisson process with a continuous intensity function $\lambda:\Rd\to\Reals_{\geq 0}$. More specifically, given the process is at $z\in\Rd$ at time $t_0=0$, the distribution $\mu_{z}$ of the time until the next jump is determined by
\begin{align*}
	\mu_{z}([t,\infty))=\exp{(-\int\limits_{0}^{t}\lambda(\varphi_{s}(z))ds)} \text{ for all } t\geq 0.
\end{align*}
We rewrite $\mu_z$ by a function $W:\Rd\times (0,1)\to\Reals_{\geq 0}$ such that $W(z,U)\sim\mu_{z}$ if $U\sim \text{Unif}_{(0,1)}$.
\begin{definition}[PDMP]
	Let $U_{1},U_2,\dots\sim\text{Unif}_{(0,1)}$ and $\xi_{1},\xi_{2},...\sim\Xi$ be independent random variables. Fix a starting position $z\in\Rd$. 
	Define jumping times $T_k$ and positions $Z_{T_k}$ at jumping times recursively: set $T_{0}=0$, $Z_{0}=z$ and 
	\begin{align*}
		T_{k}=W(Z_{T_{k-1}},U_{k})+T_{k-1},\quad
		Z_{T_{k}}=R^{\xi_{k}}(\varphi_{T_{k}-T_{k-1}}(Z_{T_{k-1}})).
	\end{align*}
	Finally, define the position between jumps $Z_t$. Let $N_{t}=\text{max}\sset{k\in\mathbb{N}}{T_{0}+...+T_{k}\leq t}$ and 
	\begin{align*}
		Z_{t}:=\varphi_{t-T_{N_{t}}}\circ R^{\xi_{k}}\circ\varphi_{T_{N_{t}}-T_{N_{t}-1}}\circ R^{\xi_{k-1}}
		...\circ R^{\xi_{1}}\circ\varphi_{T_{1}}(z)\quad \text{ for all } t>0.
	\end{align*}
\end{definition}
A process $Z_t$ defined like that is called a \emph{Piecewise-deterministic Markov process (PDMP)}.

\section{Grönwall-Jacobi inequality}
\label{sec:Jabobi}
In this section, we prove a Grönwall-like inequality for PDMPs, which will provide the basis for later sections. For all $t\in\Reals$, we write $D\varphi_t$ to denote the Jacobian of the function $\varphi_t:\Rd\to\Rd$. 
\begin{lemma}[Grönwall-Jacobi for deterministic dynamics]
	\label{Jacobiannorm}
	Suppose that assumption \ref{Lipschitz} holds. Then for all $z\in\Rd$ and $t\geq 0$: $\|D\varphi_{t}(z)\|\leq\exp{(Lt)}$ where $\|\cdot\|$ denotes the operator norm with respect to the Euclidean norm.
\end{lemma}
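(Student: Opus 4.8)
The plan is to differentiate the flow map with respect to the spatial variable, derive the variational equation (the equation of first variation) satisfied by the Jacobian $D\varphi_t(z)$, and then bound the operator norm using a Gronwall-type argument. This is a standard technique for controlling the sensitivity of ODE solutions to their initial conditions, specialized here to get the clean exponential bound $\exp(Lt)$.

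Let me think through the key steps.

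First, fix $z \in \Rd$ and consider the matrix-valued function $t \mapsto D\varphi_t(z) =: J(t)$. Since $\varphi \in C^1(\mathbb{R}^{d+1}, \Rd)$ and solves $\frac{d}{dt}\varphi_t(z) = g(\varphi_t(z))$ with $\varphi_0(z) = z$, I would differentiate this ODE with respect to $z$. Exchanging the order of differentiation (justified by the $C^1$ regularity, possibly needing $g \in C^1$ to get $\varphi$ sufficiently smooth, or an a priori smoothness upgrade via the standard theory), I obtain the variational equation
$$\frac{d}{dt} J(t) = Dg(\varphi_t(z)) \, J(t), \qquad J(0) = \mathrm{Id}.$$

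Second, I would control the growth of $\|J(t)\|$. The cleanest route is to bound the time-derivative of the norm. Using the variational equation, for any unit vector $v$ one has $\frac{d}{dt}\|J(t)v\| \leq \|\frac{d}{dt}(J(t)v)\| = \|Dg(\varphi_t(z)) J(t) v\| \leq \|Dg(\varphi_t(z))\| \, \|J(t)v\| \leq L \|J(t)v\|$, where the last inequality is exactly the equivalent formulation of Assumption~\ref{Lipschitz}, namely $\|Dg(z)\| \leq L$. Taking the supremum over unit vectors $v$ and applying the differential form of Gronwall's inequality to $t \mapsto \|J(t)\|$ (with $\|J(0)\| = 1$) yields $\|J(t)\| \leq \exp(Lt)$ for all $t \geq 0$. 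Alternatively, one can work directly with the integral form $J(t) = \mathrm{Id} + \int_0^t Dg(\varphi_s(z)) J(s)\, ds$, bound $\|J(t)\| \leq 1 + L\int_0^t \|J(s)\|\, ds$, and invoke the integral version of Gronwall.

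The main obstacle, and the point requiring care, is the justification that $D\varphi_t(z)$ exists, is continuous, and that the variational equation genuinely holds — i.e., the interchange of $\frac{d}{dt}$ and $D_z$. The excerpt already asserts $\varphi \in C^1(\mathbb{R}^{d+1}, \Rd)$, so $D\varphi_t(z)$ is well defined and continuous in $(t,z)$; what remains is to verify that $J(t)$ solves the variational equation. This follows from standard ODE theory on smooth dependence on initial conditions, which I would simply cite (as the paper does for existence and uniqueness). Given that citation, the Gronwall step is routine. One minor point worth flagging is that the statement is asserted for $t \geq 0$, so I restrict attention to forward time and need not worry about the sign issues that would arise for $t < 0$, where one would instead obtain $\|D\varphi_t(z)\| \leq \exp(L|t|)$.
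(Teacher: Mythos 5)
Your proposal is correct and follows essentially the same route as the paper: both derive the variational equation $\frac{d}{dt}D\varphi_t(z) = Dg(\varphi_t(z))\,D\varphi_t(z)$ by interchanging $\frac{d}{dt}$ and $D_z$, bound the resulting derivative by $L$ times the quantity of interest using $\|Dg\|\leq L$, and conclude via Gronwall. The only cosmetic difference is that the paper applies Gronwall to the squared norm $f(t)=\|D\varphi_t(z)x\|^2$ for a fixed vector $x$ (which sidesteps any question of differentiability of the norm), whereas you differentiate $\|J(t)v\|$ directly or use the integral form; both variants are standard and sound.
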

\begin{proof}
	By assumption \ref{Lipschitz}, there exists a $L>0$ such that 
	$\|Dg(z)\|\leq L$ for all $z\in\Rd$. Let $z,x\in\Rd$ be arbitrary and define $f(t):=\|D\varphi_{t}(z)x\|^{2}$. Then 
	\begin{align*}
	\frac{d}{dt}f(t)
	=2\eukl{\frac{d}{dt}D\varphi_{t}(z)x}{D\varphi_{t}(z)x}
	&=2\eukl{D\frac{d}{dt}\varphi_{t}(z)x}{D\varphi_{t}(z)x}\\
	&=2\eukl{D(g\circ\varphi_{t})(z)x}{D\varphi_{t}(z)x}\\
	&=2\eukl{Dg(\varphi_{t}(z))D\varphi_{t}(z)x}{D\varphi_{t}(z)x}\\
	&\leq 2L
	\|D\varphi_{t}(z)x\|^{2}\\&=2L f(t).
	\end{align*}
	By Grönwall's inequality \cite{Gronwall}, it follows that:
	\begin{align*}
	\|D\varphi_{t}(z)x\|^{2}=f(t)\leq f(0)\exp{(2Lt)}=\|x\|^{2}\exp{(2Lt)}
	\end{align*}
	and hence $\|D\varphi_{t}(z)\|\leq \exp{(Lt)}$.
\end{proof}
Consider the example on $\Reals$ with $g(x)=x$ and $L=1$. Then $\varphi_{t}(z)=z\exp{(t)}$ and $|D\varphi_{t}(z)|=|\exp{(t)}|$. So one can see that the bound is actually sharp in this case.
\begin{assumption}
	\label{Jacobirefreshmentbound}
	For every $\xi\in S$ it holds that $R^{\xi}:\Rd\to\Rd$ is in $C^{1}(\Rd,\Rd)$ and subcontractive:
	\begin{align*}
		& \|R^\xi(x)-R^\xi(y)\|\leq \|x-y\| \quad \text{for all  } x,y\in\Reals^d,\\
		\Leftrightarrow &\|DR^{\xi}(z)\|\leq 1 \quad \text{for all  } z\in\Rd.
	\end{align*}
\end{assumption}
Assumption \ref{Jacobirefreshmentbound} states that the jump maps do not enlarge distances between points. Intuitively, this allows to control the position of a PDMP after a jump. Using this assumption, 	Lemma \ref{Jacobiannorm} can be further extended to a process allowing for jumps:
\begin{proposition}[Grönwall-Jacobi for PDMPs]
	\label{boundJacobi2}
	Suppose that assumption \ref{Lipschitz} and \ref{Jacobirefreshmentbound} hold.
	Then for all $t\geq 0$, $k\in\mathbb{N}$, $t_{0},t_{1},...t_{k}\geq 0$ such that $t_{0}+...+t_{k}=t$, $\xi_{1},...\xi_{k}\in S$ and $z\in\Rd$:
	\begin{equation*}
	\|D(\varphi_{t_{k}}\circ R^{\xi_{k}}\circ\varphi_{t_{k-1}}\circ R^{\xi_{k-1}}\circ
	...\circ R^{\xi_{1}}\circ\varphi_{t_{0}})(z)\|\leq\exp{(Lt)}.
	\end{equation*}
\end{proposition}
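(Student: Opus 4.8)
The plan is to peel off the composition with the chain rule and then bound each factor uniformly, so that the total propagation time reappears additively in the exponent. Because $\varphi\in C^{1}(\mathbb{R}^{d+1},\Rd)$ and each $R^{\xi}\in C^{1}(\Rd,\Rd)$ by \cref{Jacobirefreshmentbound}, the map $\Psi:=\varphi_{t_{k}}\circ R^{\xi_{k}}\circ\dots\circ R^{\xi_{1}}\circ\varphi_{t_{0}}$ is a composition of $C^{1}$-maps, so the chain rule yields its Jacobian as an ordered product of the Jacobians of the individual maps, each evaluated at the appropriate iterate of $z$ along the trajectory. Writing $z_{0}=z$ and $z_{i}$ for the point reached after applying the first few maps, $D\Psi(z)$ equals the ordered product $D\varphi_{t_{k}}(\,\cdot\,)\,DR^{\xi_{k}}(\,\cdot\,)\cdots DR^{\xi_{1}}(\,\cdot\,)\,D\varphi_{t_{0}}(z)$.

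Next I would apply submultiplicativity of the operator norm, $\|AB\|\le\|A\|\,\|B\|$, to bound $\|D\Psi(z)\|$ by the product of the norms of these $2k+1$ factors. The crucial point is that both available estimates are \emph{uniform in the base point}: \cref{Jacobiannorm} gives $\|D\varphi_{t_{i}}(w)\|\le\exp(Lt_{i})$ for every $w\in\Rd$, and \cref{Jacobirefreshmentbound} gives $\|DR^{\xi_{i}}(w)\|\le 1$ for every $w\in\Rd$. Hence the precise values of the intermediate evaluation points $z_{i}$ are irrelevant, and this is exactly why no genuine obstacle arises.

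Multiplying the bounds, the $k$ refreshment factors each contribute a factor $1$, while the $k+1$ flow factors contribute $\prod_{i=0}^{k}\exp(Lt_{i})=\exp\!\big(L\sum_{i=0}^{k}t_{i}\big)=\exp(Lt)$, using the hypothesis $t_{0}+\dots+t_{k}=t$. Equivalently, one can run the argument as an induction on $k$: the base case $k=0$ is precisely \cref{Jacobiannorm}, and the inductive step factors off the outermost $\varphi_{t_{k}}\circ R^{\xi_{k}}$, applies the two uniform bounds together with the inductive hypothesis for the inner composition of total time $t-t_{k}$, and multiplies $\exp(Lt_{k})\cdot 1\cdot\exp(L(t-t_{k}))=\exp(Lt)$. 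I expect the entire argument to be routine bookkeeping; the only thing worth stating carefully is the uniformity of the two norm bounds, which is what lets the additive structure $\sum_{i}t_{i}=t$ survive into the exponent.
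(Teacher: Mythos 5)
Your proposal is correct and matches the paper's proof: the paper argues by exactly the induction on $k$ you sketch at the end, peeling off the outermost $\varphi_{t_k}\circ R^{\xi_k}$, applying \cref{Jacobiannorm} and \cref{Jacobirefreshmentbound}, and multiplying $\exp(Lt_k)\cdot 1\cdot \exp(L(t_0+\dots+t_{k-1}))=\exp(Lt)$. Your direct chain-rule-and-product formulation is just a repackaging of the same argument, resting on the same two uniform bounds.
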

\begin{proof}
	For $k=0$ this is Lemma \ref{Jacobiannorm}. Assume that the statement is true for $k-1$. Define $h:=\varphi_{t_{k-1}}\circ R^{\xi_{k-1}}\circ
	...\circ R^{\xi_{1}}\circ\varphi_{t_{0}}$. By assumption, it holds that
	$\|Dh\|\leq\exp{(L(t_{0}+...+t_{k-1}))}$. Hence
	\begin{align*}
	\|D(\varphi_{t_{k}}\circ R^{\xi_{k}}\circ h)(z)\|\leq&
	\|D\varphi_{t_{k}}(R^{\xi_{k}}(h(z)))\|
	\|DR^{\xi_{k}}(h(z))\|
	\|Dh(z))\|\\
	\leq&\exp{(Lt_{k})}\exp{(L(t_{0}+...+t_{k-1}))}\\
	=&\exp{(Lt)},
	\end{align*}
	where in the last inequality we used Assumption \ref{Jacobirefreshmentbound}.
\end{proof}
\section{PDMPs and the Feller property}
\label{sec:Feller}
Let $B(\Rd)$ be the space of bounded, measurable functions $f:\Rd\to\Reals$, let $C_0(\Rd)$ be the subspace of all continuous function that vanish at infinity and let $C_c^k(\Rd)$ be the subspace of $k$-times continuously differentiable functions with compact support. \\
Let $P_{t}$ be the Markov semigroup of a PDMP $Z_{t}$, i.e. $P_{t}f(z)=\eexpv{z}{f(Z_{t})}$ for all $f\in B(\Rd)$. The main goal of this section is to show fundamental properties of $P_t$, in particular that it is Feller. Define the space on which $P_t$ is strongly-continuous
\begin{align*}
		B_{P}(\Rd)&:=\sset{f\in B(\Rd)}{\lim\limits_{t\to 0}\supn{P_{t}f-f}=0}.
\end{align*}
The extended generator $\mathcal{A}$ is defined for all functions $f:\Rd\to\Reals$ such that there is a function $\mathcal{A}f:\Reals^d\to\Reals$ for which $t\mapsto \mathcal{A}f(Z_{t})$ is almost surely integrable on bounded intervals and
\begin{align}
	\label{eq:local_martingale}
	M_t^f=f(Z_t)-f(Z_0)-\int\limits_{0}^{t}\mathcal{A}f(Z_s)ds
\end{align}
is a local martingale \cite{DAVIS}. For the scope of this work, it is sufficient that every differentiable and bounded $f$ is in the domain of $\mathcal{A}$ and it holds
\begin{align}
	\label{eq:generator}
	\mathcal{A}f=\eukl{\nabla f(z)}{g(z)}+\lambda(z)(Qf(z)-f(z))
\end{align}
with $Qf(z)=\expv{f(R^\xi(z))}$ for $\xi\sim\Xi$ (see \cite[theorem 26.14]{DAVIS}).
\begin{assumption}
 	\label{Feller_assumptions}
 	One of the following two conditions is true:
 	\begin{enumerate} 
 		\item The intensity function is bounded, i.e. $\supn{\lambda}<\infty$, and as $\|z\|\to\infty$
 		\begin{align*}
 			\inf\limits_{0\leq s\leq t}\|\varphi_s(z)\|\to\infty \text{ for all } t>0,\quad \|R^{\xi}(z)\|\to\infty  \text{ for all } \xi\in S.
 		\end{align*}
 		\item Jumps are isometric, i.e. $\|R^\xi(z)\|=\|z\|$ for all $z\in\Rd$ and $\xi\in S$, and as $\|z\|\to\infty$
 		\begin{align*}
 			\inf\limits_{\substack{t_0+\dots+t_k=t\\t_i\geq 0\\
 			\xi_1,\dots,\xi_{k}\in \mathcal{S}}}\|\left [\varphi_{t_k}\circ R^{\xi_k}\circ\dots\circ R^{\xi_{1}}\circ\varphi_{t_0}\right ](z)\|\to \infty.
 		\end{align*}
 \end{enumerate}
\end{assumption}

\begin{proposition}[Feller]
	\label{Feller}
	 Let Assumption \ref{Lipschitz} and \ref{Feller_assumptions} be true, then $P_{t}$ is Feller, i.e. its semigroup $P_{t}$ satisfies the following two conditions:
	\begin{enumerate} 
		\item \textbf{Feller property}: $\forall t\geq 0, \forall f \in C_{0}(\Rd): P_{t}f\in C_{0}(\Rd) $ 
		\item \textbf{Strong continuity}: $C_0(\Rd)\subset B_P(\Rd)$
	\end{enumerate}
\end{proposition}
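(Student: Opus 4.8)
The plan is to prove the two assertions separately, in each case fixing $f\in C_0(\Rd)$ and $t\ge 0$ and working with the pathwise representation of $Z_t=Z_t(z,\omega)$ as a finite composition of flow maps $\varphi_{s_i}$ and jump maps $R^{\xi_i}$, with the driving randomness $\omega=(U_i,\xi_i)_i$ frozen; the conclusions then follow by dominated convergence with dominating constant $\|f\|_\infty$. For the continuity in assertion (i), fix $z_0\in\Rd$. Since the inter-jump laws $\mu_z$ have no atoms, the event that some jump time $T_k(z_0,\omega)$ equals exactly $t$ is $\mathbb{P}$-null; off this event one has $T_{n}(z_0,\omega)<t<T_{n+1}(z_0,\omega)$ with $n=N_t(z_0,\omega)$. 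Because the jump times depend continuously on the starting point — using continuity of $(t,z)\mapsto\varphi_t(z)$, of each $R^\xi$, and of the inverse $W(\cdot,u)$ of the continuous strictly decreasing survival function — the count $N_t(\cdot,\omega)$ is constant near $z_0$, and there $z\mapsto Z_t(z,\omega)$ is a finite composition of jointly continuous maps. Hence $z\mapsto f(Z_t(z,\omega))$ is continuous at $z_0$ for a.e.\ $\omega$, and dominated convergence yields continuity of $P_tf$.

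For the vanishing at infinity in (i) the two alternatives of \cref{Feller_assumptions} are treated differently. Writing $\psi(a,s)=a\,e^{sA}+\tfrac{B}{A}(e^{sA}-1)$ for the comparison function of \cref{lemma:bound_in_t} (nonincreasing in $s$, increasing in $a$), under alternative (2) the jumps preserve the norm, so composing the bound of \cref{lemma:bound_in_t} across segments via the flow property of the linear ODE $u'=Au+B$ gives the deterministic estimate $\|Z_t(z,\omega)\|^2\ge\psi(\|z\|^2,t)$, which tends to $\infty$ as $\|z\|\to\infty$ uniformly in $\omega$. Under alternative (1), $\|\lambda\|_\infty<\infty$ forces each inter-jump time to exceed $-\log(U_i)/\|\lambda\|_\infty$ independently of $z$, so for a.e.\ fixed $\omega$ the number of jumps before $t$ is bounded by a finite, $z$-independent $\bar N(\omega)$; composing the at most $\bar N(\omega)$ steps — each flow keeping the norm $\ge m_0(r):=\sqrt{\psi(r^2,t)}$ and each of the finitely many relevant maps $R^{\xi_i}$ sending $\|w\|\to\infty$ to $\infty$ — produces a lower bound that still tends to $\infty$ as $\|z\|\to\infty$. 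In both cases $\|Z_t(z,\omega)\|\to\infty$ for a.e.\ $\omega$, so $f(Z_t(z,\omega))\to 0$ and dominated convergence gives $P_tf(z)\to 0$.

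For the strong continuity (ii) I would estimate $\sup_z|P_tf(z)-f(z)|\le\sup_z\mathbb{E}_z[\,|f(Z_t)-f(z)|\,]$ and split at a radius $2M$ chosen so that $|f|<\varepsilon/2$ outside $\{\|w\|>M\}$. For $\|z\|>2M$ and small $t$ the norm bounds above give $\|Z_t\|>M$ (deterministically under alternative (2); under alternative (1) one instead bounds the jump contribution directly by $2\|f\|_\infty\,\mathbb{P}_z(N_t\ge1)\le 2\|f\|_\infty(1-e^{-\|\lambda\|_\infty t})$), so the integrand is $<\varepsilon$. For $\|z\|\le 2M$ the flow up to time $t\le 1$ stays in a fixed compact set $K$, on which $\lambda$ is bounded; hence $\mathbb{P}_z(N_t\ge1)\le 1-e^{-(\sup_K\lambda)t}\to 0$ uniformly, while on the no-jump event $|f(\varphi_t(z))-f(z)|\to 0$ uniformly by uniform continuity of $f$ together with $\|\varphi_t(z)-z\|\le t\sup_K\|g\|$. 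Letting $t\to 0$ gives $\sup_z|P_tf(z)-f(z)|\to 0$, i.e.\ $C_0(\Rd)\subset B_P(\Rd)$.

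The main obstacle I anticipate is the vanishing at infinity under alternative (1): there the jumps may change the norm, the hypothesis $\|R^\xi(w)\|\to\infty$ is only pointwise in $\xi$, and the number of jumps is random and a priori depends on $z$. The decisive device is the $z$-independent lower bound on the inter-jump times supplied by $\|\lambda\|_\infty<\infty$, which freezes the jump count at a finite $\bar N(\omega)$ for each realization and reduces matters to a finite composition of maps that each send infinity to infinity. A secondary technical point, needed for the continuity half of (i), is the continuity in $z$ of the jump times, i.e.\ of the inverse $W(\cdot,u)$ of the survival function.
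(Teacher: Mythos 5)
Your proposal is essentially correct, and for the vanishing-at-infinity half of (i) it follows the same route as the paper: under condition (2) of \cref{Feller_assumptions} you propagate the bound of \cref{lemma:bound_in_t} through the isometric jumps exactly as the paper's induction does, and under condition (1) you use the bounded intensity to control the jump count uniformly in $z$ — the paper truncates at a deterministic $k$ with $\mathbb{P}(N_t>k)<\varepsilon/\supn{f}$, while your pathwise bound $N_t\leq\bar N(\omega)$ via the $z$-independent lower bound $-\log(U_i)/\supn{\lambda}$ on inter-jump times is the same device in pathwise form. Where you genuinely diverge is strong continuity (ii): the paper proves it only for $f\in C_c^1(\Rd)$ via Dynkin's formula (the local martingale $M_t^f$ is a true martingale under either condition, whence $|P_tf(z)-f(z)|\leq t\supn{\mathcal{A}f}$), and then extends to all of $C_0(\Rd)$ using that $B_P(\Rd)$ is closed and $C_c^1(\Rd)$ is dense. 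Your proof is instead a direct two-region estimate (on a compact region: uniform control of the first-jump probability and of $\|\varphi_t(z)-z\|$; far away: the norm lower bound under (2), respectively the jump-probability bound under (1)), which avoids the extended-generator machinery and the density argument entirely and treats every $f\in C_0(\Rd)$ at once; this is more elementary and arguably cleaner. You also address continuity of $z\mapsto P_tf(z)$, a point the paper's proof of (i) passes over in silence (it only establishes decay at infinity).

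That continuity argument, however, contains the one genuine flaw in your proposal. You justify continuity of the jump times in the starting point by calling the survival function $t\mapsto\exp(-\int_0^t\lambda(\varphi_s(z))ds)$ \emph{strictly decreasing}, so that its inverse $W(\cdot,u)$ is continuous. This fails exactly in the paper's motivating examples: for the Bouncy Particle Sampler and the Zig-Zag process the intensity $\lambda$ vanishes on large open sets, so the survival function has flat pieces, and the generalized inverse $W(\cdot,u)$ can jump discontinuously in $z$ at any level $u$ at which the survival function of the limiting trajectory is flat. The argument is repairable: for a fixed trajectory the maximal flat pieces are disjoint intervals of positive length, hence countably many, so the set of bad levels $u$ is Lebesgue-null; since $U_i\sim\text{Unif}_{(0,1)}$ is independent of the past, almost surely every $U_i$ avoids the flat levels of the relevant segment, and at such levels $W(\cdot,U_i)$ is continuous by locally uniform convergence of the survival functions. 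Inducting over the (almost surely finitely many) jumps before $t$, and discarding the null event that some $T_k$ equals $t$ (which you did handle), restores your conclusion. As written, though, the step as justified would fail for the very processes the paper is about, so this repair is needed.
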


\begin{proof} We begin by proving strong continuity.
\paragraph{Strong continuity.} 
Let $f\in C_{c}^1(\Rd)$. If $R^s$ is isometric, $\mathcal{A}f$ is continuous and has compact support. If $\supn{\lambda}<\infty$ both summands in \cref{eq:generator} are uniformly bounded. In both cases, the local martingale in 	\cref{eq:local_martingale} is a true martingale due to the uniform bound. By taking expectation values in \cref{eq:local_martingale} and using Fubini, one has for all $z\in \Rd $:
 	\begin{align}
 		\label{Doeblin}
 		&P_{t}f(z)-f(z)-\int_{0}^{t}P_{s}\mathcal{A}f(z)ds=0\\
 		\Rightarrow&|P_{t}f(z)-f(z)|\leq\int_{0}^{t}\supn{P_{s}\mathcal{A}f}ds \leq t\supn{\mathcal{A}f} \to 0 
 	\end{align}
 	uniformly in $z$ as $t\to 0$. Consequently, $C_{c}^{1}(\Rdd)\subset B_{P}(\Rd)$. It is well-known that $B_{P}(\Rd)$ is a closed subspace and $C_{c}^{1}(\Rdd)\subset C_{0}(\Rdd)$ is dense \cite{Stone-Weierstrass}. By this, it follows that also $C_{0}(\Rdd)\subset B_{P}(\Rd)$.

\paragraph{Feller property.} Assume condition (1) of Assumption \ref{Feller_assumptions} is true. Let $f\in C_0(\Rd)$ and $\epsilon>0$. Let $N_t$ be the number of jumps up to time $t$. Since $\lambda$ is bounded, we a find a $k\in\mathbb{N}$ such that the probability of cases where $N_t>k$ is less than $\epsilon/\supn{f}$. But then
\begin{align*}
|P_tf(z)|\leq\mathbb{E}_z[|f(Z_t)|;N_t\leq k]+\epsilon.
\end{align*}
For all cases where $N_t\leq k$, we find that the norm of 
\begin{align*}
		Z_{t}:=\varphi_{t-T_{N_{t}}}\circ R^{\xi_{N_t}}\circ\varphi_{T_{N_{t}}-T_{N_{t}-1}}\circ R^{\xi_{N_t-1}}
		...\circ R^{\xi_{1}}\circ\varphi_{T_{1}}(z)
\end{align*}
goes to infinity as $\|z\|\to\infty$. (the jumping times and $N_t$ might change but this does not matter since the dynamics go to infinity uniformly over $0\leq s\leq t$ and $N_t$ is bounded). Therefore, $f(Z_t)\to 0$ as $\|z\|\to\infty$ for all cases when $N_t\leq k$. By dominated convergence, we find that 
\begin{align*}
\limsup\limits_{\|z\|\to\infty} |P_tf(z)|\leq \epsilon.
\end{align*}
Since $\epsilon>0$ was arbitary, we can conclude that $P_tf\in C_0(\Rd)$.\\
If condition (2) of Assumption \ref{Feller_assumptions} is true, it is clear that $\|Z_t\|\to\infty$ as $\|z\|\to\infty$  and therefore $f(Z_t)\to 0$ almost surely. By dominated convergence, we find that $P_tf(z)\to 0$ as $\|z\|\to\infty$ and hence $P_tf\in C_0(\Rd)$.
\end{proof}
Consider now $P_{t}$ as a semigroup on $C_{0}(\Rd)$. Let $\mathcal{L}$ be its (strong) generator on $C_{0}(\Rd)$ defined by
\begin{align*}
\mathcal{L}f=\lim\limits_{t\to 0}\frac{1}{t}(P_{t}f-f) \quad \text{ for } f \in \text{dom}(\mathcal{L}):=\sset{f\in C_{0}(\Rd)}{\lim\limits_{t\downarrow 0}\frac{1}{t}(P_{t}f-f) \text{ exists }}.
\end{align*}
As the proof of the next lemma shows, the extended generator $\mathcal{A}$ and the strong generator $\mathcal{L}$ coincide on $\text{dom}(\mathcal{L})$ under suitable assumptions. These assumptions often bound the non-local part of $\mathcal{A}$ (second summand in \cref{eq:generator}) such that jumps can be neglected for $t\to 0$.
\begin{lemma}
 		\label{Stronggenerator}
 		Let Assumption \ref{Lipschitz} and \labelcref{Feller_assumptions} be true. Then it holds that $C_{c}^{1}(\Rd)\subset \text{dom}(\mathcal{L})$.
\end{lemma}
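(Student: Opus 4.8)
The plan is to promote the Dynkin-type identity
\begin{align}
\label{plan:dynkin}
P_t f(z) - f(z) = \int_0^t P_s \mathcal{A}f(z)\, ds,
\end{align}
which was already established for $f\in C_c^1(\Rd)$ in the proof of \cref{Feller} (see \cref{Doeblin}), into a statement about the \emph{strong} generator $\mathcal{L}$. Dividing by $t$ exhibits $\frac1t(P_tf-f)$ as a Cesàro average of the orbit $s\mapsto P_s\mathcal{A}f$ near $s=0$, so the entire lemma reduces to showing that this average converges in $\supn{\cdot}$ to $\mathcal{A}f$ and that the limit lies in $C_0(\Rd)$. This is exactly the content of $f\in\text{dom}(\mathcal{L})$ with $\mathcal{L}f=\mathcal{A}f$.

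The key input, and the step I expect to be the main obstacle, is to verify that $\mathcal{A}f\in C_0(\Rd)$ for $f\in C_c^1(\Rd)$. Continuity of $\mathcal{A}f$ is routine: in \cref{eq:generator} the drift term $\eukl{\nabla f(z)}{g(z)}$ is continuous since $f\in C^1$ and $g\in C^1$, while $z\mapsto Qf(z)=\expv{f(R(z,\xi))}$ is continuous by dominated convergence (each $R^\xi$ is continuous and $f$ is bounded), so $\lambda(z)(Qf(z)-f(z))$ is continuous as well. The decay at infinity is where the two alternatives of \cref{Feller_assumptions} enter. The drift term has compact support because $\nabla f$ does. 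For the jump term I would argue by cases: under condition (1), $\supn{\lambda}<\infty$ together with $\|R^\xi(z)\|\to\infty$ forces $f(R(z,\xi))\to 0$ for every $\xi$, hence $Qf(z)\to 0$ by dominated convergence and $\lambda(z)(Qf(z)-f(z))\to 0$; under condition (2), isometry of the jumps means that if $f$ is supported in $\{\|z\|\leq\rho\}$ then both $f(z)$ and $f(R(z,\xi))$ vanish for $\|z\|>\rho$, so the jump term has compact support. In either case $\mathcal{A}f\in C_0(\Rd)$.

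Finally I would close the argument using strong continuity from \cref{Feller}. Since $\mathcal{A}f\in C_0(\Rd)\subset B_P(\Rd)$, the orbit $s\mapsto P_s\mathcal{A}f$ is $\supn{\cdot}$-continuous at $0$, and using $\frac1t(P_tf-f)=\frac1t\int_0^tP_s\mathcal{A}f\,ds$ from \cref{plan:dynkin} together with the $\supn{\cdot}$-contractivity of $P_s$,
\begin{align}
\supn{\frac1t(P_tf-f)-\mathcal{A}f}
&=\supn{\frac1t\int_0^t\bigl(P_s\mathcal{A}f-\mathcal{A}f\bigr)\,ds}\\
&\leq\sup_{0\leq s\leq t}\supn{P_s\mathcal{A}f-\mathcal{A}f}\xrightarrow{t\to0}0.
\end{align}
Thus the strong limit $\lim_{t\downarrow0}\frac1t(P_tf-f)$ exists in $\supn{\cdot}$ and equals $\mathcal{A}f\in C_0(\Rd)$, which is precisely the assertion $f\in\text{dom}(\mathcal{L})$ with $\mathcal{L}f=\mathcal{A}f$. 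Since $f\in C_c^1(\Rd)$ was arbitrary, this yields $C_c^1(\Rd)\subset\text{dom}(\mathcal{L})$.
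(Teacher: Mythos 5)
Your proposal is correct and follows essentially the same route as the paper's own proof: both rest on the Dynkin identity \cref{Doeblin}, reduce the lemma to showing $\mathcal{A}f\in C_0(\Rd)\subset B_P(\Rd)$, and verify this by the same case analysis on the two alternatives of \cref{Feller_assumptions} (compact support of the jump term under isometric jumps, dominated convergence under bounded intensity). Your write-up is in fact slightly more careful than the paper's, which leaves the continuity of $\mathcal{A}f$ implicit and contains a typo swapping the labels of the two conditions.
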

 	\begin{proof}
 	Assume for now that $\mathcal{A}f\in B_{P}(\Rd)$ for all $f\in C_{c}^{1}(\Rdd)$. Then by \cref{Doeblin}
 	\begin{align*}
 		\supn{\frac{P_{t}f-f}{t}-\mathcal{A}f}&
 		\leq\frac{\int_{0}^{t}\supn{P_{s}\mathcal{A}f-\mathcal{A}f}ds}{t} \to 0
 		\quad\text{as } t\to 0
 	\end{align*}		
 	since $\mathcal{A}f\in B_{P}(\Rd)$, which implies that $\mathcal{L}f=\mathcal{A}f$ and $f\in \text{dom}(\mathcal{L})$.
 	So it remains to show that $\mathcal{A}f\in B_{P}(\Rd)$.\\ If jumps are isometric, then it holds that $\mathcal{A}f$ has the same support as $f$, in particular compact support, and therefore is in $C_0(\Rd)\subset B_P(\Rd)$. If condition (2) of Assumption \ref{Feller_assumptions} holds, we can show directly by dominated convergence that $\mathcal{A}f(z)\to 0$ as $\|z\|\to\infty$, i.e. $\mathcal{A}f\in C_0(\Rd)\subset B_P$ as shown above. This finishes the proof.
 \end{proof}

\section{Cores for PDMPs}
\label{sec:main_result}
Knowing that $P_t$ is Feller is advantageous since $C_{0}(\Rd)$ consists of "nice" functions and due to strong continuity we can restrict our attention to regular dense subsets such as $C_c^1(\Rd)$. However, Markov processes are often studied via their strong generator $\mathcal{L}$. For example, in many cases we have no analytical expression for $P_t$, while the generator of a PDMP is explicitly given (see \cref{eq:generator}, note that $\mathcal{A}=\mathcal{L}$ on $\text{dom}(\mathcal{L})$).\\ 
Naturally, the question arises whether such "sufficient, regular subsets" exists for $\text{dom} (\mathcal{L})$ as they do for $P_t$. In contrast to the operators $P_{t}$, $\mathcal{L}$ is not continuous in general and therefore a mere dense subset is not "sufficient". That is why one searches for cores, a fundamental concept in the study of semigroups \cite{Ethier,Pazy}. A core of $\mathcal{L}$ is a subset $D\subset \text{dom} (\mathcal{L})$ such that for all $f\in \text{dom} (\mathcal{L})$ there exists a sequence $f_{n}\in D$ such that 
\begin{equation*}
\lim\limits_{n\to\infty}\supn{f_{n}-f}=0,
\quad 
\lim\limits_{n\to\infty}\supn{\mathcal{L}f_{n}-\mathcal{L}f}=0.
\end{equation*}
\begin{lemma}
	\label{crucialstepcore}
	Suppose Assumption \ref{Lipschitz}, \labelcref{Jacobirefreshmentbound} and	\labelcref{Feller_assumptions} hold.
	Define $D:=\sset{h\in \text{dom} (\mathcal{L})}
	{h\in C^{1}(\Rd), \partial _{i}h\in C_{0}(\Rd) \,\, i=1,...,d}$.
	Then for all $f\in C_{c}^{1}(\Rd)$ it holds that  $P_{t}f\in D$ for all $t\geq 0$.
\end{lemma}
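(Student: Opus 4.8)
The membership $P_tf\in D$ unpacks into three requirements: $P_tf\in\text{dom}(\mathcal{L})$, $P_tf\in C^1(\Rd)$, and $\partial_i(P_tf)\in C_0(\Rd)$ for each $i$. The first is pure semigroup calculus and requires no new work. Since $f\in C_c^1(\Rd)\subset\text{dom}(\mathcal{L})$ by \cref{Stronggenerator} and $(P_t)$ is a strongly continuous semigroup on $C_0(\Rd)$ by \cref{Feller}, the standard invariance of the domain of a strongly continuous semigroup under its own operators gives $P_tf\in\text{dom}(\mathcal{L})$ together with $\mathcal{L}P_tf=P_t\mathcal{L}f$. The real content of the lemma is therefore the spatial regularity of the map $z\mapsto P_tf(z)=\eexpv{z}{f(Z_t)}$.

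For the regularity I would first expand $P_tf$ according to the number of jumps,
\[
P_tf(z)=\sum_{k=0}^{\infty}\eexpv{z}{f(Z_t);\,N_t=k},
\]
and write each summand as an explicit integral over the ordered jump times $0<t_1<\dots<t_k<t$ and the marks $\xi_1,\dots,\xi_k$. The point of this representation is that the jump times appear as integration variables rather than as $z$-dependent quantities: holding $(t_1,\dots,t_k,\xi_1,\dots,\xi_k)$ fixed, the endpoint is the flow-and-reset composition $\varphi_{t-t_k}\circ R^{\xi_k}\circ\dots\circ R^{\xi_1}\circ\varphi_{t_1}(z)$ evaluated at $z$, while the accompanying weight is a product of intensity factors $\lambda$ along the path times the survival exponential. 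Every factor is then a differentiable function of $z$, so the strategy is to differentiate under the integral and under the sum.

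To justify the interchange I would invoke dominated convergence. The spatial Jacobian of each flow-and-reset composition is bounded by $\exp(Lt)$ uniformly in the jump times and marks by the Gronwall--Jacobi estimate \cref{boundJacobi2}; combined with boundedness of $\nabla f$, control of $\lambda$ and $\nabla\lambda$ along the path, and the survival factor being at most $1$, this bounds the differentiated $k$-jump integrand by a constant multiple of the $k$-jump probability, whose sum over $k$ is summable (with the clean factorial decay $(\supn{\lambda}\,t)^k/k!$ in case (1) of \cref{Feller_assumptions}). This yields $P_tf\in C^1(\Rd)$ with an explicit formula for $\nabla P_tf$. For the vanishing at infinity I would then let $\|z\|\to\infty$: by the no-return-from-infinity behaviour established inside the proof of \cref{Feller} (via \cref{lemma:bound_in_t} and \cref{eq:uniform_lower_bound_in_t}), the endpoint $Z_t$ escapes to infinity on each fixed-jump configuration. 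Since differentiating $f(Z_t)$ produces terms each carrying a factor $\nabla f(Z_t)$, while differentiating the intensity and survival weights produces terms each still carrying the factor $f(Z_t)$, and since $f,\nabla f$ have compact support, every term vanishes, and a second dominated-convergence argument gives $\partial_i(P_tf)(z)\to 0$.

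I expect the main obstacle to lie precisely in the interchange of differentiation with integration and summation. The differentiation genuinely hits the intensity $\lambda$ along the entire path, so the argument needs differentiability of $\lambda$ with a controlled gradient rather than mere continuity (a simple pure-jump example with $g\equiv 0$, isometric resets and a continuous but non-differentiable $\lambda$ already produces a non-$C^1$ semigroup, so some such hypothesis is indispensable), and the resulting bounds must be made uniform in the jump times and summable in the number of jumps. The Gronwall--Jacobi bound of \cref{boundJacobi2} is exactly what renders the geometric part of these derivatives controllable uniformly across arbitrarily many jumps, which is why it was isolated beforehand.
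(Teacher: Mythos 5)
Your route is genuinely different from the paper's, and the difference sits exactly where the mathematical substance lies. The paper handles $P_tf\in\text{dom}(\mathcal{L})$ the same way you do, but for the $C^1$ part it simply writes $P_tf(z)=\expv{f\circ\varphi_{t-T_{N_t}}\circ R^{\xi_{N_t}}\circ\dots\circ R^{\xi_1}\circ\varphi_{T_1}(z)}$, asserts that the function inside the expectation is differentiable in $z$, bounds its gradient by $\supn{\nabla f}\exp(Lt)$ via \cref{boundJacobi2}, and differentiates under the expectation by dominated convergence. This treats the jump times $T_k$ and the jump count $N_t$ as if they did not depend on $z$; in fact $T_k=W(Z_{T_{k-1}},U_k)+T_{k-1}$ depends on $z$ through the integrated intensity $\int_0^\cdot\lambda(\varphi_s(\cdot))ds$, so the $z$-derivative of the integrand contains contributions the paper never estimates (and for fixed randomness the integrand is not even continuous in $z$ at configurations where $N_t$ changes). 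Your expansion over $\{N_t=k\}$ with the jump times as integration variables is the representation in which differentiation under the integral is actually legitimate, and it is precisely there that derivatives of $\lambda$ enter. The paper's argument is only valid as written when the jump times genuinely do not depend on $z$, i.e.\ essentially when $\lambda$ is constant (the RHMC case).

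Moreover, your claim that some differentiability of $\lambda$ is indispensable is correct, and your counterexample sketch works: take $d=1$, $g\equiv 0$, $R(z)=-z$ (so \cref{Lipschitz}, \labelcref{Jacobirefreshmentbound} and condition (2) of \labelcref{Feller_assumptions} all hold) and $\lambda(z)=|z-1|$. The process is then a two-state chain on $\{z,-z\}$ with rates $a=\lambda(z)$, $b=\lambda(-z)$, so for $f\in C_c^\infty(\Reals)$ with $f\equiv 1$ near $1$ and $f\equiv 0$ near $-1$ one gets, for $z$ near $1$,
\begin{align*}
P_tf(z)=\frac{b}{a+b}+\frac{a}{a+b}e^{-(a+b)t},\qquad a=|z-1|,\quad b=|z+1|,
\end{align*}
which is a smooth function of $(a,b)$ with $\partial_a P_tf=\frac{1}{2}(e^{-2t}-1)\neq 0$ at $(a,b)=(0,2)$, composed with the kink $a=|z-1|$; hence $P_tf\notin C^1(\Reals)$ and \cref{crucialstepcore} fails as stated. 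So you have exposed a genuine gap in the paper rather than the other way round. One caveat on your proposed fix: requiring $\lambda\in C^1$ with controlled gradient would exclude the paper's own flagship applications, since the Zig-Zag and BPS intensities $\lambda(q,p)=\max(\eukl{\nabla U(q)}{p},0)$ are not differentiable on the set where the inner product vanishes. What your integral representation really needs is differentiability of $z\mapsto\int_0^t\lambda(\varphi_s(z))ds$ and of the resulting jump-time densities; this holds for those samplers because their trajectories spend a Lebesgue-null set of times on the kink set, whereas in the counterexample above ($g\equiv 0$) the trajectory sits on it permanently.
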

\begin{proof}
	Let $f\in C_{c}^{1}(\Rd)$.  $P_{t}f\in C_{0}(\Rd)$ holds by Assumption \ref{Feller}. Since $P_{t}$ is a contraction semigroup and $f\in \text{dom} (\mathcal{L})$ by \cref{Stronggenerator}, one has $P_{t}f\in \text{dom} (\mathcal{L})$ and $\mathcal{L}f\in C_{0}(\Rd)$. To show differentiability, rewrite
	\begin{align}
	\label{question1}
	P_{t}f(z)
	= \eexpv{z}{f(Z_{t})}
	=\expv{f\circ\varphi_{t-T_{N_{t}}}\circ R^{\xi_{k}}\circ\varphi_{T_{N_{t}}-T_{N_{t}-1}}\circ R^{\xi_{k-1}}
		...\circ R^{\xi_{1}}\circ\varphi_{T_{1}}(z)}.
	\end{align}
	 The function within the expectation is differentiable in $z$. By dominated convergence, it therefore suffices to show that the gradient is uniformly bounded:
	\begin{align*}
	&\|\nabla (f\circ\varphi_{t-T_{N_{t}}}\circ R^{\xi_{k}}\circ\varphi_{T_{N_{t}}-T_{N_{t}-1}}\circ R^{\xi_{k-1}}
	...\circ R^{\xi_{1}}\circ\varphi_{T_{1}})(z))\|\\
	=&\|(\nabla f(\varphi_{t-T_{N_{t}}}\circ R^{\xi_{k}}\circ\varphi_{T_{N_{t}}-T_{N_{t}-1}}\circ R^{\xi_{k-1}}
	...\circ R^{\xi_{1}}\circ\varphi_{T_{1}}(z)))^{T}\\
		\nonumber
	&\,D(\varphi_{t-T_{N_{t}}}\circ R^{\xi_{k}}\circ\varphi_{T_{N_{t}}-T_{N_{t}-1}}\circ R^{\xi_{k-1}}
	...\circ R^{\xi_{1}}\circ\varphi_{T_{1}})(z)\|\\
	\leq&\supn{\nabla f}\exp{(Lt)}
	\end{align*}
	by the Grönwall-Jacobi bound in Assumption \ref{boundJacobi2}. One can conclude that:
	\begin{align*}
	\partial_{j}P_{t}f(z)
	=\mathbb{E}[(&\nabla f(\varphi_{t-T_{N_{t}}}\circ R^{\xi_{k}}\circ\varphi_{T_{N_{t}}-T_{N_{t}-1}}\circ R^{\xi_{k-1}}
	...\circ R^{\xi_{1}}\circ\varphi_{T_{1}}(z)))^{T}\\
	\nonumber
	&\,\partial_{j}(\varphi_{t-T_{N_{t}}}\circ R^{\xi_{k}}\circ\varphi_{T_{N_{t}}-T_{N_{t}-1}}\circ R^{\xi_{k-1}}
	...\circ R^{\xi_{1}}\circ\varphi_{T_{1}})(z)],
	\end{align*}
	and by dominated convergence one can conclude that $\partial_{j}P_{t}f$ is continuous and bounded by $\supn{\nabla f}\exp{(Lt)}$. 
	In addition,
	\begin{align}
	|\partial_{j}P_{t}f(z)|
	\leq&\exp{(Lt)}\expv{\|\nabla f(\varphi_{t-T_{N_{t}}}\circ R^{\xi_{k}}\circ\varphi_{T_{N_{t}}-T_{N_{t}-1}}\circ R^{\xi_{k-1}}
		...\circ R^{\xi_{1}}\circ\varphi_{T_{1}}(z))\|}\\
	=&\exp{(Lt)}\eexpv{z}{\|\nabla f(Z_{t})\|}.
	\label{eq:bound_grad_f}
	\end{align}
	Since $\|\nabla f\|\in C_{c}(\Rd)$, by the Feller property $P_{t}(\|\nabla f\|)=\eexpv{z}{\|\nabla f(Z_{t})\|}\in C_{0}(\Rd)$. The bound in \cref{eq:bound_grad_f} then implies that $\partial_{j}P_{t}f$ vanishes at infinity as well. Combined with the continuity, we can conclude that  $\partial_{j}P_{t}f\in C_0(\Rd)$.
\end{proof}
\begin{corollary}
	Suppose that Assumption \ref{Lipschitz}, \labelcref{Jacobirefreshmentbound} and \labelcref{Feller_assumptions} hold.
	Then the subspace $D$ is a core of the generator $\mathcal{L}$ of the semigroup $P_{t}$ considered as a semigroup on $C_{0}(\Rd)$.
\end{corollary}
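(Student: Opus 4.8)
The plan is to reduce the statement to a standard criterion from semigroup theory: if $D\subseteq\text{dom}(\mathcal{L})$ is dense in $C_0(\Rd)$ and invariant under the semigroup, i.e.\ $P_tD\subseteq D$ for all $t\geq 0$, then $D$ is a core of $\mathcal{L}$ (see, e.g., \cite[Chapter~1]{Ethier}). By \cref{Feller}, $(P_t)_{t\geq 0}$ is a strongly continuous semigroup on $C_0(\Rd)$ with generator $\mathcal{L}$, and it is a contraction since $\supn{P_tf}\leq\supn{f}$. Hence it suffices to verify the two hypotheses of the criterion: density of $D$ and its invariance under $P_t$.

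For density, I would first observe that $C_c^1(\Rd)\subseteq D$. Indeed, every $f\in C_c^1(\Rd)$ lies in $\text{dom}(\mathcal{L})$ by \cref{Stronggenerator}, is continuously differentiable, and has partial derivatives $\partial_i f\in C_c(\Rd)\subseteq C_0(\Rd)$. Since $C_c^1(\Rd)$ is dense in $C_0(\Rd)$ (as already used in the proof of \cref{Feller}), the larger space $D$ is dense in $C_0(\Rd)$ as well.

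For invariance, the key point is that the argument proving \cref{crucialstepcore} in fact establishes the stronger statement $P_tD\subseteq D$. Reexamining that proof, the only place where $f\in C_c^1(\Rd)$ entered beyond $f\in C_0(\Rd)\cap\text{dom}(\mathcal{L})$ was through the bound $|\partial_jP_tf(z)|\leq\exp(Lt)\,P_t(\supn{\nabla f})(z)$ together with the Feller property applied to $\supn{\nabla f}$. For a general $h\in D$ one still has $h\in C_0(\Rd)\cap\text{dom}(\mathcal{L})$, $\supn{\nabla h}<\infty$, and $\|\nabla h\|\in C_0(\Rd)$ (each $\partial_i h\in C_0(\Rd)$, so the Euclidean norm of $\nabla h$ is continuous and vanishes at infinity). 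The Gronwall--Jacobi bound of \cref{boundJacobi2} then yields, by dominated convergence, differentiability of $P_th$ with uniformly bounded derivatives, and $P_t(\|\nabla h\|)\in C_0(\Rd)$ by \cref{Feller}, so that $\partial_jP_th\in C_0(\Rd)$. Together with $P_th\in C_0(\Rd)\cap\text{dom}(\mathcal{L})$, this gives $P_th\in D$, as required.

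Having verified both hypotheses, the criterion applies and $D$ is a core. I expect the main obstacle to be essentially expository rather than analytic: the substantive estimate — the uniform Jacobian bound enabling differentiation under the expectation — has already been carried out in \cref{boundJacobi2,crucialstepcore}, so the remaining care is in checking that the abstract core theorem genuinely applies here (strong continuity from \cref{Feller}, the contraction property from $\supn{P_tf}\leq\supn{f}$) and that the proof of \cref{crucialstepcore} extends verbatim from $C_c^1(\Rd)$ to all of $D$ once one replaces the appeal to compact support of $\|\nabla f\|$ by the Feller property on $C_0(\Rd)$.
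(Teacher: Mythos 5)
Your proof is correct, but it routes through a slightly different abstract criterion than the paper does, and this changes how much work is needed. The paper invokes the two-space version of the core criterion (\cite[Proposition 1.3.3]{Ethier}): if $D_0\subset D\subset\text{dom}(\mathcal{L})$ are both dense in $C_0(\Rd)$ and $P_t$ maps $D_0$ into $D$ for all $t\geq 0$, then $D$ is a core. Taking $D_0=C_c^1(\Rd)$, the hypothesis $P_t:C_c^1(\Rd)\to D$ is exactly the statement of \cref{crucialstepcore}, so the paper's proof is essentially a citation. You instead use the one-space special case ($D_0=D$, i.e.\ full invariance $P_tD\subseteq D$), which forces you to strengthen \cref{crucialstepcore} from $C_c^1(\Rd)$ to all of $D$. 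Your verification of that strengthening is sound: the only properties of $f\in C_c^1(\Rd)$ used in the lemma's proof are $f\in\text{dom}(\mathcal{L})$, $f\in C^1(\Rd)$, $\supn{\nabla f}<\infty$, and $\|\nabla f\|\in C_0(\Rd)$ (so that the Feller property yields $P_t(\|\nabla f\|)\in C_0(\Rd)$), and each of these holds for an arbitrary $h\in D$, since $\partial_i h\in C_0(\Rd)$ for all $i$ gives both boundedness of $\nabla h$ and $\|\nabla h\|=\bigl(\sum_{i}(\partial_i h)^2\bigr)^{1/2}\in C_0(\Rd)$; the remaining ingredients ($P_th\in C_0(\Rd)$ by \cref{Feller}, $P_th\in\text{dom}(\mathcal{L})$ by invariance of the domain under the semigroup, and the uniform Jacobian bound of \cref{boundJacobi2}) are unchanged. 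So your argument buys a genuinely stronger conclusion (semigroup invariance of $D$) and only needs the more commonly quoted form of the core theorem, at the cost of reproving the lemma in greater generality, while the paper's choice of the sharper abstract result lets it use \cref{crucialstepcore} exactly as stated.
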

\begin{proof}
	Since $C_{c}^{1}(\Rd)\subset C_{0}(\Rd)$ is dense \cite{Stone-Weierstrass}, $C_{c}^{1}(\Rd)\subset D$ and $P_{t}:C_{c}^{1}(\Rd)\to D\subset \text{dom} (\mathcal{L})$ by Lemma \ref{crucialstepcore}, it follows by \cite[proposition 1.3.3]{Ethier} that $D$ is a core of $\mathcal{L}$.
\end{proof}
We can further reduce $D$ to a more regular subset:
\begin{lemma}
	\label{C1ccore}
	Suppose that Assumption \ref{Lipschitz}, \labelcref{Jacobirefreshmentbound}, and 	\labelcref{Feller_assumptions} hold.
	Then the set $C_{c}^{1}(\Rd)$ is a core of $\mathcal{L}$.
\end{lemma}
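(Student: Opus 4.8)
The strategy is to exploit that the subspace $D$ of the preceding corollary is \emph{already} a core, so it suffices to show that every $h\in D$ is a limit, in the graph norm $\supn{\cdot}+\supn{\mathcal{L}\cdot}$, of functions in $C_c^1(\Rd)$; the core property then transfers by transitivity of graph-density. To build the approximants I would fix one radial cutoff profile $\psi\in C_c^\infty(\Reals)$ with $\psi\equiv 1$ on $[0,1]$, $\psi\equiv 0$ on $[2,\infty)$ and $0\leq\psi\leq 1$, and set $\chi_n(z):=\psi(\|z\|/n)$ and $h_n:=\chi_n h$. Then $h_n\in C_c^1(\Rd)\subset\text{dom}(\mathcal{L})$ by \cref{Stronggenerator}, and since $h\in C_0(\Rd)$ while $\chi_n-1$ is supported in $\{\|z\|\geq n\}$, one gets $\supn{h_n-h}\leq\sup_{\|z\|\geq n}|h(z)|\to 0$.

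The heart of the argument is to estimate $\mathcal{L}h_n-\mathcal{L}h$. Since $h_n$ and $h$ are bounded and $C^1$, both generators are given by the explicit formula \cref{eq:generator}, and a direct computation yields the commutator identity $\mathcal{L}(\chi_n h)-\chi_n\mathcal{L}h=h\,\eukl{\nabla\chi_n}{g}+\lambda\,(Q(\chi_n h)-\chi_n Qh)$. Writing $\mathcal{L}h_n-\mathcal{L}h$ as this commutator plus $(\chi_n-1)\mathcal{L}h$, and noting $\supn{(\chi_n-1)\mathcal{L}h}\leq\sup_{\|z\|\geq n}|\mathcal{L}h(z)|\to 0$ because $\mathcal{L}h\in C_0(\Rd)$, it remains to drive the two commutator terms to $0$ uniformly. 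For the drift term, $\nabla\chi_n$ is supported in the annulus $\{n\leq\|z\|\leq 2n\}$ with $\supn{\nabla\chi_n}\leq\supn{\psi'}/n$, while the Lipschitz bound gives $\|g(z)\|\leq L\|z\|+\|g(0)\|\leq 2Ln+\|g(0)\|$ there; hence $\supn{h\,\eukl{\nabla\chi_n}{g}}\leq(\sup_{\|z\|\geq n}|h|)\,(2L\supn{\psi'}+\supn{\psi'}\|g(0)\|/n)\to 0$, so the linear growth of $g$ is exactly compensated by the $1/n$ scaling of $\nabla\chi_n$ together with the decay of $h$ at infinity.

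For the jump term I would split into the two regimes of \cref{Feller_assumptions}. If jumps are isometric, radiality of $\chi_n$ forces $\chi_n(R(z,\xi))=\psi(\|z\|/n)=\chi_n(z)$, so $Q(\chi_n h)=\chi_n Qh$ identically and the term vanishes; this is essential, since $\lambda$ need not be bounded in this case. If instead $\lambda$ is bounded and $\|R^\xi(z)\|\to\infty$ as $\|z\|\to\infty$, I would first observe $Qh\in C_0(\Rd)$: dominated convergence (dominating constant $\supn{h}$) gives both continuity of $Qh$ and $Qh(z)=\expv{h(R(z,\xi))}\to 0$. Then $Q(\chi_n h)-\chi_n Qh=Q(\chi_n h-h)-(\chi_n-1)Qh$, and since $Q$ is a sup-norm contraction the first piece is bounded by $\supn{\chi_n h-h}\to 0$ and the second by $\sup_{\|z\|\geq n}|Qh|\to 0$; multiplying by the bounded $\lambda$ preserves these limits.

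I expect the main obstacle to be precisely this jump term in the unbounded-intensity (isometric) case, where no crude $\supn{\lambda}$ bound is available; the resolution is the deliberate choice of a \emph{radial} cutoff, which makes the jump commutator cancel exactly under isometry. A secondary subtlety is that the drift commutator does not decay from the cutoff alone --- its $O(1)$ size coming from the linear growth of $g$ is tamed only because $h$ vanishes at infinity on the very annulus where $\nabla\chi_n$ is supported. Once both commutator terms and $(\chi_n-1)\mathcal{L}h$ are shown to vanish uniformly, $\{h_n\}\subset C_c^1(\Rd)$ approximates $h$ in graph norm, so $C_c^1(\Rd)$ is graph-dense in the core $D$ and hence is itself a core.
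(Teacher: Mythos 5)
Your proposal is correct and follows essentially the same route as the paper's proof: a radially symmetric cutoff scaled so that $\supn{\nabla\chi_n}=O(1/n)$ compensates the linear growth $\|g(z)\|\leq L\|z\|+\|g(0)\|$, exact cancellation $Q(\chi_n h)=\chi_n Qh$ under isometric jumps handles the possibly unbounded intensity, and $Qh\in C_0(\Rd)$ via dominated convergence together with boundedness of $\lambda$ handles the other case. Your repackaging of the error as a commutator plus $(\chi_n-1)\mathcal{L}h$ is only a cosmetic reorganization of the paper's decomposition, with all the same key estimates.
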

\begin{proof}
	Since $C_{c}^{1}(\Rd)\subset D$ and $D$ is a core, it suffices to show that for all $f\in D$ there exist $f_{n}\in C_{c}^{1}(\Rd)$ such that 
	$\supn{f_{n}-f}\to 0$ and $\supn{\mathcal{L}f_{n}-\mathcal{L}f}\to 0$ as $n\to\infty$.\\
	Let $B(0,R):=\{x\in\Rd | \|x\|< R\}$.
	Let $f\in D$ be arbitrary and let $\eta$ be  a $C_{c}^{\infty}(\Rd)$-function such that
	$\eta\equiv 1 $ on $B(0,1)$, $\eta\equiv 0 $ on $\Rd\setminus B(0,c)$ for some $c>1$, $0\leq\eta\leq 1$ and $\supn{\nabla\eta}\leq 1$. One can choose $\eta$ by $\eta(x)=\phi(\|x\|)$ for some function $\phi:\Reals\to\Reals$, in particular it holds then  $\eta(x)=\eta(y)$ whenever $\|x\|=\|y\|$.
	Define $\eta_{k}(x):=\eta(\frac{1}{k}x)$. Then
	\begin{align*}
	\eta_{k} \equiv 1 \text{  on } B(0,k), \quad \eta_{k}\equiv 0 \text{  on  } \Rd\setminus B(0,kc), \quad \supn{\nabla\eta_{k}}\leq\frac{1}{k}.
	\end{align*} 
	Then for $f_{k}:=\eta_{k}f\in C_{c}^{1}(\Rd)$, one has 
	\begin{align}
	\label{eq:whatever}
	\supn{f_{k}-f}\leq\sup\limits_{x\notin B(0,k)}|f(x)|\to 0 
	\end{align}
	as $k\to\infty$ since $f\in C_{0}(\Rd)$.
	Moreover, using $\nabla(\eta_{k}f)=f\nabla\eta_{k}+\eta_{k}\nabla f$:
	\begin{align*}
	\mathcal{L}f_{k}=\eukl{g}{f\nabla\eta_{k}+\eta_{k}\nabla f}+
	\lambda(Qf_{k}-f_{k})
	=&f\eukl{g}{\nabla\eta_{k}}+\eta_{k}\eukl{g}{\nabla f}+
	\lambda(Qf_{k}-f_{k}),
	\end{align*}
	and therefore
	\begin{align}
	&\|\mathcal{L}f-\mathcal{L}f_{k}\|_{\infty}\\
	\label{eq:conv_of_generators_term}
	\leq&\supn{f\eukl{g}{\nabla\eta_{k}}}+
	\supn{
		(1-\eta_{k})(\eukl{g}{\nabla f})+\lambda(
		\eexpv{z}{\left[(1-\eta_{k})f\right](R^\xi(z))}-(1-\eta_{k})f)
	}
	\end{align}
	Define $B_{k}:=\overline{B(0,kc)}\setminus B(0,k)$ and compute using Assumption \ref{Lipschitz}
	\begin{align*}
	\supn{
		f\eukl{g}{\nabla\eta_{k}}}
	\leq\sup\limits_{z\in B_{k}}\Bigl(|f(z)|\|g(z)\|\|\nabla \eta_{k}(z)\|\Bigr)
	\leq&\sup\limits_{z\in B_{k}}\Bigl(|f(z)|(L\|z\|+\|g(0)\|) \frac{1}{k}\Bigr)\\
	\leq&(Lc+\|g(0)\|)\sup\limits_{z\in B_{k}}|f(z)|
	\end{align*}
    which goes to zero as $k\to\infty$ since $f\in C_0(\Rd)$.\\
	Next, we show that the second term in 	\cref{eq:conv_of_generators_term} goes to zero if one of the conditions in Assumption \ref{Feller_assumptions} holds.
	Firstly, assume condition (2). Then, since $\|R^\xi(z)\|=\|z\|$, also  $\eta_k(R^\xi(z))=\eta_k(z)$ and the second term becomes
	\begin{align*}
	\supn{
		(1-\eta_{k})\mathcal{L}f
	}\leq\sup\limits_{z\notin B_{k}}|\mathcal{L}f(z)|,
	\end{align*}
	which converges to $0$ as $k\to\infty$ since $\mathcal{L}f\in C_{0}(\Rd)$.\\
	Secondly, assume condition (1). Then one can show by dominated convergence that  $Qf\in C_{0}(\Rd)$ and $\eukl{g}{\nabla f}=\mathcal{L}f-\lambda(Qf-f)$ is in $C_{0}(\Rd)$.
	So we can bound the second term by
	\begin{align*}
	\supn{
		(1-\eta_{k})(\eukl{g}{\nabla f})}+2\supn{\lambda}\supn{f-f_{k}}.
	\end{align*}
	This goes to zero since $\eukl{g}{\nabla f}\in C_0(\Reals^d)$ and due to 	\cref{eq:whatever}.
\end{proof}
\begin{theorem}
	\label{Cinftycore}
		Suppose that Assumption \ref{Lipschitz}, \labelcref{Jacobirefreshmentbound}, and 	\labelcref{Feller_assumptions} hold. Then
	$C_{c}^{\infty}(\Rd)$ is a core of the generator $\mathcal{L}$ of the semigroup $P_{t}$ considered as a semigroup on $C_{0}(\Rd)$.
\end{theorem}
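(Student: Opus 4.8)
The plan is to leverage \cref{C1ccore}, which already shows that $C_c^1(\Rd)$ is a core, and to pass from $C_c^1(\Rd)$ to $C_c^\infty(\Rd)$ by mollification. Being a core is equivalent to being dense in $\text{dom}(\mathcal{L})$ with respect to the graph norm $f\mapsto\supn{f}+\supn{\mathcal{L}f}$, and density is transitive; since $C_c^\infty(\Rd)\subset C_c^1(\Rd)\subset\text{dom}(\mathcal{L})$ by \cref{Stronggenerator}, it therefore suffices to approximate every $f\in C_c^1(\Rd)$ by a sequence $f_k\in C_c^\infty(\Rd)$ with $\supn{f_k-f}\to 0$ and $\supn{\mathcal{L}f_k-\mathcal{L}f}\to 0$. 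To this end I would fix a standard mollifier $\rho\in C_c^\infty(\Rd)$ with $\rho\geq 0$, $\int\rho=1$ and $\text{supp}(\rho)\subset B(0,1)$, set $\rho_k(x):=k^d\rho(kx)$, and define $f_k:=f*\rho_k\in C_c^\infty(\Rd)$.

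First I would record the elementary facts about $f_k$. If $\text{supp}(f)\subset B(0,M)$, then $\text{supp}(f_k)\subset B(0,M+1)$ for every $k\geq 1$, so all the $f_k$ are supported in the fixed compact set $K:=\overline{B(0,M+1)}$. Since $f$ and each $\partial_i f$ are continuous with compact support, hence uniformly continuous, standard properties of mollification give $\supn{f_k-f}\to 0$, and, using $\partial_i f_k=(\partial_i f)*\rho_k$, also $\supn{\nabla f_k-\nabla f}\to 0$.

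The main step is the uniform convergence $\mathcal{L}f_k\to\mathcal{L}f$, which I would split along the two summands of \cref{eq:generator}. For the transport term, $\eukl{g}{\nabla f_k}-\eukl{g}{\nabla f}=\eukl{g}{\nabla f_k-\nabla f}$ vanishes outside $K$, while on $K$ one has $\|g\|\leq L(M+1)+\|g(0)\|$ by \cref{Lipschitz}; hence it is bounded by $(L(M+1)+\|g(0)\|)\supn{\nabla f_k-\nabla f}\to 0$. For the jump term I would write the difference as $\lambda(Q(f_k-f)-(f_k-f))$ and use that $Q$ is a contraction for $\supn{\cdot}$, so $\supn{Q(f_k-f)}\leq\supn{f_k-f}$. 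Under condition (1) of \cref{Feller_assumptions}, $\lambda$ is bounded and the term is dominated by $2\supn{\lambda}\supn{f_k-f}\to 0$. Under condition (2), isometry of the jumps confines both $Q(f_k-f)$ and $f_k-f$ to $K$, where the continuous $\lambda$ is bounded, so the term is dominated by $2(\sup_K\lambda)\supn{f_k-f}\to 0$.

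The only genuine obstacle is the global unboundedness of $g$, and of $\lambda$ in case (2), which a priori could destroy the uniform control of $\mathcal{L}f_k-\mathcal{L}f$. This is resolved precisely by the fact that convolution with $\rho_k$ does not enlarge supports beyond the fixed compact set $K$, together with the isometry property in case (2), which keeps the jump term supported in $K$ as well. Once these bounds are in place, the graph-norm approximation is complete and transitivity of the core property finishes the proof.
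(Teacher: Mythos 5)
Your proposal is correct and follows essentially the same route as the paper: both approximate an arbitrary $f\in C_c^1(\Rd)$ by mollifications $f_k\in C_c^\infty(\Rd)$ with supports in a fixed ball, split $\mathcal{L}f_k-\mathcal{L}f$ into the transport term and the jump term, and handle the jump term separately under the two conditions of \cref{Feller_assumptions} (bounded $\lambda$, respectively isometric jumps confining everything to the fixed compact set), then invoke \cref{C1ccore} and transitivity of graph-norm density. Your write-up is, if anything, slightly more explicit than the paper's about the support-propagation under $Q$ in the isometric case and about the transitivity step, but the substance is identical.
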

\begin{proof}
	Let $f\in C_{c}^{1}(\Rd)$ be arbitrary. Choose $f_{n}\in C_{c}^{\infty}(\Rd)$ such that $\supn{f_{n}-f}$ and $\supn{\nabla f_{n}-\nabla f}$ go to zero as $n\to\infty$ and such that $\text{supp}f,\text{supp}f_{n}\subset B(0,R)$ for a $R>0$ (for example choose $f_{n}:=f*\eta_{k}$ for a mollifier $\eta_{k}$). With a similiar computation as above, one sees that $\mathcal{L}f_{n}\to \mathcal{L}f$ uniformly:
	\begin{align*}
	&\|\mathcal{L}f_{n}-\mathcal{L}f\|_{\infty}\\
	=&\|\lambda(Q(f_{n}-f)-(f_{n}-f))+\eukl{\nabla f_{n}-\nabla f}{g}\|_{\infty}\\
	\leq&\|\lambda Q(f_{n}-f)\|_{\infty}+ \sup\limits_{z\in B(0,R)}|\lambda(z)|\supn{f_{n}-f}
	+\sup\limits_{z\in B(0,R)}\Bigl(|\eukl{\nabla f_{n}(z)-\nabla f(z)}{g(z)}|\Bigr)\\
	\leq&\|\lambda Q(f_{n}-f)\|_{\infty}+ \sup\limits_{z\in B(0,R)}|\lambda(z)|\supn{f_{n}-f}
	+(LR+\|g(0)\|)\|\nabla f_{n}-\nabla f\|_{\infty}.
	\end{align*}
	Clearly, the second and the third term converge to zero. What about the first term? If jumps are isometric $\|R^\xi(z)\|=\|z\|$, then
	\begin{align*}
	\|\lambda Q(f_{n}-f)\|_{\infty}\leq \sup\limits_{z\in B(0,R)}|\lambda(z)|\supn{f_{n}-f}\to 0 \text{ as } n\to\infty.
	\end{align*}
 If $\lambda$ is bounded (condition 1 of Assumption \ref{Feller_assumptions}), one similarly gets that 
 \begin{align*}
 \|\lambda Q(f_{n}-f)\|_{\infty}\leq\supn{\lambda}\|Q(f_n-f)\|_{\infty}\leq\supn{\lambda}\|f_n-f\|_{\infty}\to 0 \text{ as } n\to\infty.
 \end{align*}
 This finishes the proof.
\end{proof}
\section{MCMC algorithms fulfilling the assumptions}
\label{sec:examples}
In this section, we give examples of PDMPs fulfilling the assumptions of this work where we will focus on recently proposed MCMC schemes. All algorithms aim to provide samples from a probability distribution $\pi$ on $\Rd$ with  
\begin{align}
\label{eq:target_dist}
\pi(dq)=c^{-1}_{0}\exp{(-U(q))}dq,
\end{align}
where $U:\Rd\to\Reals$ is interpreted as potential and $c_0$ is the normalizing constant.\\
Instead of sampling directly from $\pi$, all these MCMC schemes
simulate Markov processes on an enlarged state space $\Rd\times\Rd$ with invariant distribution $\pi\otimes\mu$ where $\mu=\mathcal{N}(0,\mathbf{1}_d)$ (alternatively $\mu=\text{Unif}(\mathbb{S}^{d-1})$). For an element $z=(q,p)\in\Rdd$, we interpret $q\in\Rd$ as the position and $p$ as the corresponding velocity. 
\subsection{Randomized Hamiltonian Monte Carlo}
Firstly, we discuss the Randomized Hamiltonian Monte Carlo method (RHMC) \cite{RHMCintro}. While for the popular Hamiltonian Monte Carlo method the times between jumps are constant, here these times are $\text{Exp}(\lambda_{\text{ref}})$-distributed. Considered as a PDMP, this means that we have a constant intensity function $\lambda(z)=\lambda_{\text{ref}}$. The vector field $g$ is given by laws of physics, i.e. $g(q,p)=(p,-\nabla U(q))$ where $U:\Rd\to\Reals$ is interpreted as a potential and $\nabla U$ as force. We assume throughout that $U\geq 0$. The physical law of conservation of energy then imply the invariance of the Boltzmann-Gibbs distribution $\pi\otimes\mathcal{N}(0,\mathbf{1}_d)$ under these dynamics \cite{Actanumerica}. Refreshments only act on the velocities: $R^{\xi}(q,p)=(q,\alpha p+\sqrt{1-\alpha^{2}}\xi)$ where $\xi\sim\nd{0}{I_{d}}$ and $0\leq\alpha\leq 1$.
\begin{corollary}
	Assume that $0<\alpha\leq 1$, the force $\nabla U$ is Lipschitz-continuous and the potential fulfils
	\begin{align}
	\lim\limits_{\|z\|\to\infty}U(z)=\infty.
	\end{align}
	Then it holds that the RHMC process is Feller and $C_{c}^{\infty}(\Rdd)$ is a core of its generator.
\end{corollary}
\begin{proof}
If $\nabla U$ is Lipschitz then the vector field $g$ is Lipschitz, so Assumption \ref{Lipschitz} is true. Since $\|DR^{\xi}(q,p)\|= 1$ (in particular $\leq 1$), it satisfies  Assumption \ref{Jacobirefreshmentbound}. Finally, we show that condition (2) of Assumption \ref{Feller_assumptions} is true. The flows preserves energy given by the Hamiltonian $H$ \cite{Actanumerica}, i.e. for $z=(q,p)\in\Rdd$:
\begin{align*}
H(\varphi_t(z))=H(z),\quad \text{  where }
H(z)=U(q)+\frac{1}{2}\|p\|^2.
\end{align*}
Let $R>0$ be arbitrary. By assumption, $H^{-1}([0,L])$ is compact for every $L>0$. By the Heine-Borel theorem, there is an $L_0>0$ such that $B(0,R)=\{x\in\Rd | \|x\|< R\}\subset H^{-1}([0,L_0])=:K$. For all $z$ outside of $K$, it holds that $H(\varphi_t(z))=H(z)>L_0$ whereby it follows $\|\varphi_t(z)\|>R$, i.e.
\begin{align*}
	\inf\limits_{t\geq 0}\|\varphi_t(z)\|\geq R.
\end{align*}
For $\alpha>0$, it is clear that $R^\xi(z)\to\infty$ as $\|z\|\to\infty$. This shows that condition (2) of Assumption \ref{Feller_assumptions} holds. The statement follows by Assumption \ref{Feller} and \cref{Cinftycore}.
\end{proof}
In \cite{DELIG}, it was proven that RHMC is Feller under similiar assumptions. But instead of using resolvents and semigroup theory, we can give a direct proof using the results of this work.\\
The assumption that $\alpha>0$ is crucial for the proof. Intuitively, if $\alpha=0$, the process "forgets" its velocity after each jump. So even if our process $Z_t$ starts at $z=(q,p)$ and it holds that $\|p\|\to\infty$ (and thereby $\|z\|\to\infty$) we cannot infer that $\|Z_t\|$ goes to infinity as well.
\subsection{Isometric refreshment process - Zig-Zag, Pure Reflection Process}
Now, we consider PDMPs where velocities are constant in deterministic intervals, i.e. the flow is 
\begin{equation}
\label{BPSflow}
\varphi_{t}(q,p)=(q+tp,p).
\end{equation} 
In particular, the vector field is $g(q,p)=(p,0)$ which is clearly Lipschitz. So Assumption \ref{Lipschitz} holds trivially. Let $\lambda$ be an arbitrary continuous intensity function. Consider refreshments by $(q,p)\mapsto (q,O^{\xi,q}(p))$ where $O^{\xi,q}\in\mathcal{O}(d)$ is a random orthogonal matrix. We will call this an \emph{Isometric Refreshment process}. Examples include the Zig-Zag process, where $R^{\xi}(q,p)=(q,p_{1},...,p_{i(\xi)-1},-p_{i(\xi)},p_{i(\xi)+1}...,p_{d})$ with $i(\xi)$ is a random index \cite{bierkens2016zigzag}, the Pure Reflection process, where $R:(q,p)\mapsto(q,-p)$ \cite{Pure_reflection}, and as we will see also a version of the Bouncy Particle Sampler \cite{BPSwork}.
\begin{corollary}
	\label{cor:core_for_irp}
	An isometric refreshment process (e.g. Zig-Zag or a Pure Reflection process) is Feller and it holds that $C_{c}^{\infty}(\Rdd)$ is a core of the generator of the corresponding semigroup.
\end{corollary}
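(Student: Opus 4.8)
The plan is to verify the three structural hypotheses of \cref{Cinftycore}---namely \cref{Lipschitz}, \cref{Jacobirefreshmentbound} and \cref{Feller_assumptions}---for the isometric refreshment process, and then to read off both conclusions directly from \cref{Feller} and \cref{Cinftycore}. First I would dispatch \cref{Lipschitz}. Since the flow is $\varphi_t(q,p)=(q+tp,p)$, the driving vector field is the linear map $g(q,p)=(p,0)$, whose Jacobian is the constant block matrix sending $(q,p)\mapsto(p,0)$. Its operator norm is a fixed finite constant (one may take $L=1$), so $g$ is Lipschitz and $g\in C^1(\Rdd,\Rdd)$, as required.

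Next I would establish the ``no return from infinity'' hypothesis through condition (2) of \cref{Feller_assumptions}. Writing $z=(q,p)$, the refreshment $R^\xi(q,p)=(q,O^{\xi,q}(p))$ fixes the position and acts on the velocity by an orthogonal matrix, so $\|R(z,\xi)\|^2=\|q\|^2+\|O^{\xi,q}p\|^2=\|q\|^2+\|p\|^2=\|z\|^2$. Hence the jumps are isometric for every $\xi$, and condition (2) holds no matter which continuous intensity $\lambda$ is chosen. At this point \cref{Feller} already yields that the process is Feller, and this half of the corollary needs nothing beyond linearity of $g$ and orthogonality of $O^{\xi,q}$.

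The remaining and, I expect, genuinely delicate ingredient for the core statement is \cref{Jacobirefreshmentbound}, which demands $R^\xi\in C^1$ together with the subcontractive bound $\|DR^\xi\|\le 1$. In the examples named in the corollary the orthogonal matrix is position-independent: for the Pure Reflection process $O=-I_d$ and for the Zig-Zag process $O^\xi$ flips the sign of a single (randomly chosen) coordinate. In both cases $R^\xi$ is linear with Jacobian equal to the block-diagonal orthogonal matrix $\mathrm{diag}(I_d,O^\xi)$, so $R^\xi\in C^1(\Rdd,\Rdd)$ and $\|DR^\xi\|=1$, which is precisely \cref{Jacobirefreshmentbound}. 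The subtlety---and the one point that is not merely ``immediate''---is that for a general isometric refreshment $O^{\xi,q}$ may vary with $q$; then $DR^\xi$ picks up an off-diagonal block $\partial_q\bigl(O^{\xi,q}p\bigr)$ and the bound $\|DR^\xi\|\le 1$ can break down even though the norm identity $\|R(z,\xi)\|=\|z\|$ survives. I would therefore phrase the hypothesis so that $q\mapsto O^{\xi,q}$ is either constant (covering the stated examples) or $C^1$ with that off-diagonal block controlled, so that subcontractivity is preserved. With \cref{Lipschitz}, \cref{Jacobirefreshmentbound} and \cref{Feller_assumptions} all in force, \cref{Cinftycore} applies verbatim and gives that $C_c^\infty(\Rdd)$ is a core of the generator, completing the proof.
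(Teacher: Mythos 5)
Your proof is correct and follows essentially the paper's own route: the paper, too, simply observes that $g(q,p)=(p,0)$ is Lipschitz and that the jumps are isometric, and then cites \cref{Feller} and \cref{Cinftycore}. The one substantive difference is that the paper's (one-sentence) proof is silent about \cref{Jacobirefreshmentbound}, which \cref{Cinftycore} genuinely requires; your verification for the named examples, where $O^{\xi}$ does not depend on $q$, so that $R^{\xi}$ is linear with Jacobian $\mathrm{diag}(I_d,O^{\xi})$ and $\|DR^{\xi}\|=1$, is exactly what fills this in. Your warning about position-dependent isometries is also well taken and identifies a real gap in the corollary as stated for \emph{arbitrary} isometric refreshment processes: if $O^{\xi,q}$ varies with $q$, then $DR^{\xi}(q,p)$ is block lower-triangular with off-diagonal block $A:=\partial_q\bigl(O^{\xi,q}p\bigr)$, and applying it to a unit vector $(u,0)$ with $Au\neq 0$ gives norm $\sqrt{1+\|Au\|^2}>1$, so subcontractivity fails even though $\|R^{\xi}(z)\|=\|z\|$ still holds and the Feller half (which only needs \cref{Lipschitz} and \cref{Feller_assumptions}) survives. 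This is not a defect of your argument but an implicit assumption the paper glosses over, and it resurfaces downstream: the paper invokes this corollary for the Bouncy Particle Sampler with $\alpha=1$, whose bounce map $\mathcal{R}(q)$ depends on $q$ through $\nabla U(q)$, so there \cref{Jacobirefreshmentbound} needs (and does not obviously admit) separate verification.
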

\begin{proof}
The only non-trivial thing to show is condition (2) of Assumption \ref{Feller_assumptions}. Fix $t>0$. If the process starts at $z=(q_0,p_0)$, 
we know that for some $p_1,\dots,p_l$ such that $\|p_l\|=\|p_0\|$ it holds
\begin{align*}
Z_t=(q_0+\sum\limits_{l=0}^{k}t_lp_l,p_k).
\end{align*}
In particular, $\|Z_t\|\geq\|p_k\|=\|p_0\|$ and $\|Z_t\|\geq\|q_0\|-\sum\limits_{l=0}^{k}t_l\|p_l\|=\|q_0\|-t\|p_0\|$. Therefore,
 		\begin{align*}
 			\inf\limits_{\substack{t_0+\dots+t_k=t\\t_i\geq 0\\
 			\xi_1,\dots,\xi_{k}\in \mathcal{S}}}\|\left [\varphi_{t_k}\circ R^{\xi_k}\circ\dots\circ R^{\xi_{1}}\circ\varphi_{t_0}\right ](z)\|\geq \max(\|p_0\|,\|q_0\|-t\|p_0\|).
 		\end{align*}
As $\|z\|=\|(q_0,p_0)\|\to\infty$, the right-hand side goes to infinity. Hence, condition (2) of Assumption \ref{Feller_assumptions} is fulfilled and we can apply Lemma \ref{Feller} and Theorem \ref{Cinftycore}.
\end{proof}
\subsection{Bouncy Particle Sampler}
The Bouncy Particle Sampler (BPS) was  introduced in \cite{BPSwork,BPSphysics}. Again, the state space is $\Rd\times\Rd$ and the vector field is $g(q,p)=(p,0)$. But the bouncy particle sampler admits two kinds of jump mechanism: bounces and velocity refreshments. If a bounce occurs at state $z=(q,p)$, it is done in the same way as a particle would change his velocity after a collision with the hyperplane  $\{\nabla U(q)\}^{\perp}$, i.e. the coordinates $q$ stay constant but the velocity changes by
\begin{equation}
\label{bouncedef}
p\mapsto p-2\frac{\eukl{\nabla U(q)}{p}}{\|\nabla U(q)\|^{2}}\nabla U(q)=:\mathcal{R}(q)p.
\end{equation}
The intensity for bounces are given by $\lambda(q,p)=\max(\eukl{\nabla U(q)}{p},0)=:\eukl{\nabla U(q)}{p}_{+}$.\\
Velocity refreshments are similiar to the ones of the RHMC: it is governed by a constant intensity $\lref>0$ and done by $(q,p)\to(q,\alpha p +\sqrt{1-\alpha^{2}}\xi)$ where $\xi\sim\nd{0}{I_{d}}$ with $0\leq\alpha\leq 1$ \cite{BPSwork}.
\begin{lemma}
	\label{BPSwithoutbouncesisFeller}
	For the Bouncy Particle Sampler with only bounces as refreshments, i.e. $\alpha=1$, the underlying PDMP is Feller and $C_{c}^{\infty}(\Rdd)$ is a core of its generator.
\end{lemma}
\begin{proof}
	By direct computation, one can see that $\mathcal{R}(q)$ is an orthogonal map and the BPS with $\alpha=1$ is an isometric refreshment process. The statement follows by Theorem \ref{cor:core_for_irp}.
\end{proof}
In practice, the Bouncy Particle Sampler is used with refreshments:
\begin{corollary}
	For autoregressive velocity refreshments, i.e. $\alpha>0$, the Bouncy Particle Sampler is Feller and $C_{c}^{\infty}(\Rdd)$ is a core of its generator.
\end{corollary}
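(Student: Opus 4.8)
The full BPS does not fit \cref{Feller}/\cref{Cinftycore} directly: its total jump intensity $\eukl{\nabla U(q)}{p}_+ + \lref$ is unbounded and its velocity refreshments are not isometric, so neither condition of \cref{Feller_assumptions} holds. The plan is therefore to view the full BPS as a \emph{bounded perturbation} of the pure--bounce process of \cref{BPSwithoutbouncesisFeller}, which is covered, and to transfer both the Feller property and the core across the perturbation. Concretely, split the extended generator as
\begin{align}
\mathcal{A}f &= \mathcal{A}_0 f + Bf, \qquad Bf := \lref\bigl(Q_{\mathrm{ref}}f - f\bigr),\\
\mathcal{A}_0 f &:= \eukl{\nabla f}{g} + \eukl{\nabla U(q)}{p}_+\bigl(Q_{\mathrm{bounce}}f - f\bigr),
\end{align}
where $g(q,p)=(p,0)$, $Q_{\mathrm{bounce}}f(q,p)=f(q,\mathcal{R}(q)p)$ and $Q_{\mathrm{ref}}f(q,p)=\expv{f(q,\alpha p+\sqrt{1-\alpha^2}\xi)}$ for $\xi\sim\nd{0}{I_d}$. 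The operator $\mathcal{A}_0$ is precisely the generator of the pure--bounce BPS ($\alpha=1$), which by \cref{BPSwithoutbouncesisFeller} generates a Feller semigroup $P_t^0$ on $C_0(\Rdd)$ whose strong generator $\mathcal{L}_0$ has $C_c^\infty(\Rdd)$ as a core.

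First I would note that $g(q,p)=(p,0)$ is linear, hence Lipschitz, so that \cref{Lipschitz} holds and the base process is indeed the one from \cref{BPSwithoutbouncesisFeller}. The key step is then to show that $B$ is a \emph{bounded} operator on $C_0(\Rdd)$ mapping $C_0(\Rdd)$ into itself, with $\supn{Bf}\le 2\lref\supn{f}$. Continuity of $Q_{\mathrm{ref}}f$ and the bound $\supn{Q_{\mathrm{ref}}f}\le\supn{f}$ are immediate by dominated convergence; the only point needing care is that $Q_{\mathrm{ref}}f$ vanishes at infinity. If $\|q\|\to\infty$ this holds because $\sup_{p}|f(q,p)|\to 0$, and if $\|q\|$ stays bounded while $\|p\|\to\infty$ it holds because $\|\alpha p+\sqrt{1-\alpha^2}\xi\|\to\infty$ almost surely, whence $f(q,\alpha p+\sqrt{1-\alpha^2}\xi)\to 0$ and dominated convergence applies. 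This is exactly where the hypothesis $\alpha>0$ is used: for $\alpha=0$ the operator $B$ would not preserve $C_0(\Rdd)$.

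Once $B$ is known to be a bounded perturbation, standard semigroup theory takes over \cite{Pazy,Ethier}. The operator $\mathcal{L}_0+B$ on $\mathrm{dom}(\mathcal{L}_0)$ generates a strongly continuous semigroup $\tilde P_t$, given by the Dyson--Phillips (Duhamel) series built from $P_t^0$ and $B$; since $P_t^0$ and $B$ both map $C_0(\Rdd)$ into itself and the series converges in operator norm uniformly on compact time intervals, $\tilde P_t$ is again Feller. Moreover, because $B$ is bounded the graph norms of $\mathcal{L}_0$ and $\mathcal{L}_0+B$ are equivalent, so the core $C_c^\infty(\Rdd)$ of $\mathcal{L}_0$ is automatically a core of $\mathcal{L}_0+B$.

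The main obstacle is to identify $\tilde P_t$ with the genuine Markov semigroup $P_t$ of the full BPS, and here the constant refreshment rate is essential. Since $\lref$ does not depend on the state, the refreshment times form an independent Poisson process of rate $\lref$, so the full BPS is obtained by interlacing the pure--bounce dynamics with refreshments applied at those times. Conditioning on the number and positions of the refreshment times and summing reproduces exactly the Dyson--Phillips series for $\tilde P_t$, giving $P_t=\tilde P_t$. Consequently $P_t$ is Feller, and since $\mathcal{A}f=(\mathcal{L}_0+B)f$ for $f\in C_c^\infty(\Rdd)$, its generator is $\mathcal{L}_0+B$, for which $C_c^\infty(\Rdd)$ is a core. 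Making this interlacing identification fully rigorous---matching the probabilistic construction to the analytic perturbation series---is the one step that requires genuine care; everything else is routine given \cref{BPSwithoutbouncesisFeller}.
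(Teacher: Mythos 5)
Your proposal is correct, and for the core statement it is essentially the paper's own argument: the paper also writes $\mathcal{L}=\mathcal{L}'+B$, where $\mathcal{L}'$ is the pure-bounce generator covered by \cref{BPSwithoutbouncesisFeller} and $Bf=\lref(\expv{f(q,\alpha p+\sqrt{1-\alpha^2}\xi)}-f)$ is bounded, and concludes that the core $C_c^\infty(\Rdd)$ of $\mathcal{L}'$ remains a core of $\mathcal{L}$ (it also notes, as an alternative, that the proof of \cref{Cinftycore} goes through with one summand per jump mechanism). Where you genuinely depart from the paper is the Feller property and the identification of the semigroup. The paper proves the Feller property directly and probabilistically: each of the two jump mechanisms separately satisfies one of the two conditions of \cref{Feller_assumptions} (bounces are isometric; refreshments have constant intensity and, for $\alpha>0$, send $\|z\|\to\infty$), so the two cases in the proof of \cref{Feller} can simply be combined --- condition on the number of refreshment events, which has bounded rate, and between refreshments use \cref{lemma:bound_in_t} together with isometry of the bounces. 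Once the full BPS is known to be Feller, the identity $\mathcal{L}=\mathcal{L}'+B$ is soft operator theory: both sides generate strongly continuous semigroups on $C_0(\Rdd)$ and agree on $C_c^\infty(\Rdd)$, a core of $\mathcal{L}'+B$, hence they coincide. Your route instead builds $\tilde P_t$ analytically from the Dyson--Phillips series and must then prove $P_t=\tilde P_t$ via the interlacing/first-refreshment decomposition; as you note yourself, this is the delicate step, and it is exactly the work that the paper's direct Feller argument avoids. It is doable --- by the strong Markov property at the first refreshment time one obtains the Duhamel equation
\begin{align}
P_tf=e^{-\lref t}P_t^0f+\int_0^t \lref e^{-\lref s}P_s^0\, Q_{\mathrm{ref}}\,P_{t-s}f\,ds,
\end{align}
whose bounded solutions are unique by Gronwall, and $\tilde P_t$ satisfies the same equation --- but in a full write-up this must be carried out, not just asserted. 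In exchange, your argument yields a reusable principle: superimposing any constant-rate jump mechanism whose kernel is a bounded operator preserving $C_0(\Rdd)$ onto a process already known to be Feller preserves both the Feller property and the core, without revisiting the probabilistic arguments of \cref{Feller}.
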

\begin{proof}
	The proof that the process is Feller is similiar to the proof of Proposition  \ref{Feller} - each jump mechanism fulfils one of the two conditions in Assumption \ref{Feller_assumptions}.\\
	To show that $C_c^\infty(\Rdd)$ is a core, let, as before, $0<\alpha\leq 1$. Let  $\mathcal{L}'$ be the generator of the BPS with $\alpha=1$ (only bounces) and $\mathcal{L}$ the generator of the BPS as assumed in the corollary. Then by \cite[theorem 26.14]{DAVIS}
	\begin{equation}
	\mathcal{L}=\mathcal{L}'+B
	\end{equation}
	where $Bf:=\lref(\mathbb{E}[f(q,\alpha p +\sqrt{1-\alpha^{2}}\xi)]-f)$ with $\xi\sim\mathcal{N}(0,I_d)$. Since $B$ is a bounded operator, the core $C_{c}^{\infty}(\Rdd)$ of $\mathcal{L}'$ will also be a core of $\mathcal{L}$. So Lemma \ref{BPSwithoutbouncesisFeller} implies the statement. Alternatively, the proof for Theorem \ref{Cinftycore} can be performed in the same way having two summands for each jump mechanism.
\end{proof}

\section{Applications}
\label{sec:applications}
\subsection{Invariance of probability measures}

Let $\mu$ be a probability measure on $\Rd$, $\mathcal{L}$ the generator of a Markov process $Z_t$ on $\Rd$ and $D$ a core of $\mathcal{L}$. By \cite[Prop. 9.9.2]{Ethier}, the condition
\begin{align*}
\int\mathcal{L}fd\mu=0,\quad \text{for all } f\in D
\end{align*}
implies that $\mu$ is an invariant distribution of $Z_t$, i.e. if $Z_0\sim\mu$ then also $Z_t\sim\mu$ for all $t>0$.\\
By \Cref{Cinftycore}, we can provide very simple proofs for the invariance of the target distribution under the afore-mentioned MCMC schemes since we can choose $D=C_c^\infty(\Rdd)$ and use the  regularity of functions in $C_c^\infty(\Rdd)$.\\
As an illustration, we outline this for RHMC and invariant distribution $\mu=\pi\otimes\mathcal{N}(0,\mathbf{1}_d)$ with $\pi$ given as in \cref{eq:target_dist}. We compute using \cref{eq:generator}
\begin{align*}
\int \mathcal{L}f \mu(dz)=\int\eukl{\nabla f}{(p,\nabla U(q)}\mu (dz)+\lref\int \mathbb{E}[f(q,\alpha p +\sqrt{1-\alpha^2}\xi)]-f(z)\mu(dz).
\end{align*}
Due to the regularity of $f$, one can use integration by parts to show that the first term vanishes. 
The second one vanishes since for independent $p,\xi\sim\mathcal{N}(0,\mathbf{1}_d)$ also $\alpha p+\sqrt{1-\alpha^2}\xi\sim\mathcal{N}(0,\mathbf{1}_d)$. In \cite{RHMCintro} where RHMC was introduced, the authors performed a similiar computation - but here, we can stop at this point since we know $C_c^\infty(\Rdd)$ is a core.
\subsection{Martingale problems}
As another application of the previous results, we present an equivalent characterization of a PDMP process by martingale problems \cite[chapter 4]{Ethier}. For example, such a characterization is used in the analysis of scaling limits of PDMPs \cite{DELIG}.  Define $\mathcal{A}:=\sset{(f,\mathcal{L}f)}{f\in C_{c}^{\infty}(\Rd)}$. 
\begin{corollary}
	\label{uniquenessmartingaleproblem}
	The PDMP process $Z_t$ fulfilling Assumption \ref{Lipschitz},  \labelcref{Jacobirefreshmentbound}, and \labelcref{Feller_assumptions} with initial distribution $Z_0\sim v$ is the unique solution of the martingale problem for $(\mathcal{A},v)$, i.e. if $Y_{t}$ is any progressively measurable process on $\Rd$ such that $Y_{0}\sim v$ and for all $f\in C_{c}^{\infty}(\Rd)$
	\begin{align}
	M_{t}^{Y,f}:=f(Y_{t})-\int\limits_{0}^{t}\mathcal{L}f(Y_{s})ds	
	\end{align}
	is a martingale, then $Y$ is a Markov process with the same transition semigroup as $Z_t$.
\end{corollary}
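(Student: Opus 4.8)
The plan is to deduce the statement from the general theory of martingale problems for Feller semigroups \cite[chapter 4]{Ethier}, with the core property of \cref{Cinftycore} as the essential input. I would proceed in three steps: first show that $Z_t$ is itself a solution, then use the core property to reduce the martingale problem for $\mathcal{A}$ to one for the full generator $\mathcal{L}$, and finally invoke the well-posedness theorem for Feller generators to obtain uniqueness together with the Markov/semigroup identification.

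First I would verify that the PDMP $Z_t$ solves the martingale problem for $(\mathcal{A},v)$. By \cref{Feller}, $P_t$ is a Feller semigroup whose strong generator on $C_0(\Rd)$ is $\mathcal{L}$, and $C_c^\infty(\Rd)\subset C_c^1(\Rd)\subset\text{dom}(\mathcal{L})$ by \cref{Stronggenerator}. For $f\in C_c^\infty(\Rd)$ the uniform bounds on $\mathcal{A}f=\mathcal{L}f$ obtained in the proof of \cref{Feller} show that the local martingale of \cref{eq:local_martingale} is in fact a true martingale (Dynkin's formula). Hence $M_t^{Z,f}$ is a martingale for every $f\in C_c^\infty(\Rd)$, so $Z_t$ with $Z_0\sim v$ is a solution; note also that $\mathcal{L}f$ is bounded, so the integral term defining $M_t^{Y,f}$ is well-defined for any candidate solution.

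For uniqueness, the decisive step is \cref{Cinftycore}: since $C_c^\infty(\Rd)$ is a core of $\mathcal{L}$, the closure of the graph $\mathcal{A}=\{(f,\mathcal{L}f):f\in C_c^\infty(\Rd)\}$ coincides with the full generator $\{(f,\mathcal{L}f):f\in\text{dom}(\mathcal{L})\}$ of the Feller semigroup $P_t$. By the standard fact that a process solves the martingale problem for an operator if and only if it solves it for the closure of its graph \cite[Proposition 4.3.1]{Ethier}, any progressively measurable $Y_t$ solving the martingale problem for $\mathcal{A}$ also solves it for $\mathcal{L}$. I would then apply the well-posedness theorem for Feller generators \cite[Theorem 4.4.1]{Ethier}: because $\mathcal{L}$ generates the Feller semigroup $P_t$ and has dense domain in $C_0(\Rd)$, the martingale problem for $(\mathcal{L},v)$ is well-posed and every solution is a Markov process with transition semigroup $P_t$. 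Combining the two reductions, $Y_t$ is a Markov process with the same semigroup as $Z_t$, which is the claim.

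The main obstacle is matching the hypotheses of the Ethier--Kurtz well-posedness result to what is available. Existence and the identification of $\mathcal{L}$ as a Feller generator are supplied directly by \cref{Feller}; the genuinely delicate point is that the martingale problem is posed only on the small class $C_c^\infty(\Rd)$ rather than on all of $\text{dom}(\mathcal{L})$, which is exactly where the core property \cref{Cinftycore} enters, through the graph-closure argument. Beyond this I would only need to check the routine structural conditions under which \cite[Theorem 4.4.1]{Ethier} applies, namely local compactness and separability of $\Rd$ and the positive maximum principle for $\mathcal{L}$; the latter holds automatically since $\mathcal{L}$ is a Feller generator, so these verifications are straightforward in the present setting.
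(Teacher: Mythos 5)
Your proposal is correct and follows essentially the same route as the paper: both use the core property of \cref{Cinftycore} to pass from the martingale problem on $C_{c}^{\infty}(\Rd)$ to the one for the full generator $\mathcal{L}$ on $\text{dom}(\mathcal{L})$, and then invoke the Ethier--Kurtz Feller well-posedness theory (theorem 4.4.1) to identify every solution as a Markov process with semigroup $P_t$. The only cosmetic difference is that you delegate the reduction step to the graph-closure proposition in Ethier--Kurtz, whereas the paper carries out the underlying approximation argument ($f_k\to f$, $\mathcal{L}f_k\to\mathcal{L}f$ uniformly, hence $M_t^{Y,f_k}\to M_t^{Y,f}$ in $L^1$) by hand.
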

\begin{proof}
	Since $P_t$ is Feller, it is well-known that all solutions of the martingale problem for $(\mathcal{A}',v)$ with $\mathcal{A}'=\sset{(f,\mathcal{L}f)}{f\in \text{dom} (\mathcal{L})}$ have the same transition semigroup as $Z_t$ \cite[theorem 1.2.6, theorem 4.4.1]{Ethier}.\\
	Now, let $Y_{t}$ be a solution for $(\mathcal{A},v)$ and $f\in \text{dom} (\mathcal{L})$ be arbitrary. Since $C_{c}^{\infty}(\Rd)$ is a core of $\text{dom} (\mathcal{L})$, we can find $f_{n}\in C_{c}^{\infty}(\Rd)$ such that $\supn{\mathcal{L}f_{n}-\mathcal{L}f},\supn{f_{n}-f}\to 0$. It can be easily seen that then for every $t\geq 0$: $M_{t}^{Y,f_{n}}\to M_{t}^{Y,f}$ in $L^1$ (convergence in mean). Since $M_{t}^{Y,f_{n}}$ is a martingale, this implies that $M_{t}^{Y,f}$ is a martingale. So every solution of the martingale problem for $(\mathcal{A},v)$ is a solution for $(\mathcal{A}',v)$. Hence, the uniqueness of $(\mathcal{A}',v)$ implies the statement.
\end{proof}




\begin{thebibliography}{10}

\bibitem{MDsimulation}
Berni~J Alder and Thomas~Everett Wainwright.
\newblock Studies in molecular dynamics. i. general method.
\newblock {\em The Journal of Chemical Physics}, 31(2):459--466, 1959.

\bibitem{article}
Christophe Andrieu, Nando De~Freitas, Arnaud Doucet, and Michael~I Jordan.
\newblock An introduction to mcmc for machine learning.
\newblock {\em Machine learning}, 50(1-2):5--43, 2003.

\bibitem{ODEs}
Bernd Aulbach.
\newblock {\em Gewöhnliche differenzialgleichungen}.
\newblock Elsevier GmbH München, second edition, 2004.

\bibitem{APSTPH}
Artur Baumg{\"a}rtner, Kurt Binder, J-P Hansen, MH~Kalos, K~Kehr, DP~Landau,
  D~Levesque, H~M{\"u}ller-Krumbhaar, C~Rebbi, Y~Saito, et~al.
\newblock Applications of the monte carlo method in statistical physics,
  volume~36.
\newblock Springer Science \& Business Media, 2013.

\bibitem{bierkens2016zigzag}
Joris Bierkens, Paul Fearnhead, Gareth Roberts, et~al.
\newblock The zig-zag process and super-efficient sampling for bayesian
  analysis of big data.
\newblock {\em The Annals of Statistics}, 47(3):1288--1320, 2019.

\bibitem{Actanumerica}
Nawaf Bou-Rabee and J.~M. Sanz-Serna.
\newblock Geometric integrators and the hamiltonian monte carlo method.
\newblock {\em Acta Numer.}, 27:113--206, 2018.

\bibitem{RHMCintro}
Nawaf Bou-Rabee and Jesus~Maria Sanz-Serna.
\newblock Randomized hamiltonian monte carlo.
\newblock {\em The Annals of Applied Probability}, 27(4):2159--2194, 2017.

\bibitem{BPSwork}
Alexandre Bouchard-C{\^o}t{\'e}, Sebastian~J Vollmer, and Arnaud Doucet.
\newblock The bouncy particle sampler: a nonreversible rejection-free markov
  chain monte carlo method.
\newblock {\em Journal of the American Statistical Association},
  113(522):855--867, 2018.

\bibitem{DAVIS}
Mark~HA Davis.
\newblock {\em Markov models \& optimization}.
\newblock Routledge, 2018.

\bibitem{Stone-Weierstrass}
Louis De~Branges.
\newblock The stone-weierstrass theorem.
\newblock {\em Proceedings of the American Mathematical Society},
  10(5):822--824, 1959.

\bibitem{DELIG}
George Deligiannidis, Daniel Paulin, Alexandre Bouchard-C{\^o}t{\'e}, and
  Arnaud Doucet.
\newblock Randomized hamiltonian monte carlo as scaling limit of the bouncy
  particle sampler and dimension-free convergence rates.
\newblock {\em arXiv preprint arXiv:1808.04299}, 2018.

\bibitem{MCMCRevolution}
Persi Diaconis.
\newblock The markov chain monte carlo revolution.
\newblock {\em Bulletin of the American Mathematical Society}, 46(2):179--205,
  2009.

\bibitem{SMCMIP}
Arnaud Doucet, Nando Freitas, Kevin Murphy, and Stuart Russell.
\newblock {\em Sequential Monte Carlo Methods in Practice}, volume~36.
\newblock Springer Science \& Business Media, 2013.

\bibitem{Monmarch}
Alain Durmus, Arnaud Guillin, and Pierre Monmarch{\'e}.
\newblock Piecewise deterministic markov processes and their invariant measure.
\newblock {\em arXiv preprint arXiv:1807.05421}, 2018.

\bibitem{Ethier}
Stewart~N Ethier and Thomas~G Kurtz.
\newblock {\em Markov processes: characterization and convergence}, volume 282.
\newblock John Wiley \& Sons, 2009.

\bibitem{Pure_reflection}
Paul Fearnhead, Joris Bierkens, Murray Pollock, Gareth~O Roberts, et~al.
\newblock Piecewise deterministic markov processes for continuous-time monte
  carlo.
\newblock {\em Statistical Science}, 33(3):386--412, 2018.

\bibitem{Freitas2000SequentialMC}
JFG~de Freitas, Mahesan Niranjan, Andrew~H. Gee, and Arnaud Doucet.
\newblock Sequential monte carlo methods to train neural network models.
\newblock {\em Neural computation}, 12(4):955--993, 2000.

\bibitem{Ghahramaniarticle}
Zoubin Ghahramani.
\newblock Probabilistic machine learning and artificial intelligence.
\newblock {\em Nature}, 521(7553):452--459, 2015.

\bibitem{Gronwall}
Thomas~Hakon Gronwall.
\newblock Note on the derivatives with respect to a parameter of the solutions
  of a system of differential equations.
\newblock {\em Annals of Mathematics}, pages 292--296, 1919.

\bibitem{doi:10.1063/1.4929529}
Masaharu Isobe and Werner Krauth.
\newblock Hard-sphere melting and crystallization with event-chain monte carlo.
\newblock {\em The Journal of chemical physics}, 143(8):084509, 2015.

\bibitem{Nishikawa_2016}
Yoshihiko Nishikawa and Koji Hukushima.
\newblock Event-chain monte carlo algorithm for continuous spin systems and its
  application.
\newblock In {\em Journal of Physics: Conference Series}, volume 750, 2016.

\bibitem{Pazy}
Amnon Pazy.
\newblock {\em Semigroups of linear operators and applications to partial
  differential equations}, volume~44.
\newblock Springer Science \& Business Media, 2012.

\bibitem{BPSphysics}
E.~A. J.~F. Peters and G.~de~With.
\newblock Rejection-free monte carlo sampling for general potentials.
\newblock {\em Physical Review E}, 85(2), 2012.

\bibitem{PDMCMC}
Paul Vanetti, Alexandre Bouchard-C{\^o}t{\'e}, George Deligiannidis, and Arnaud
  Doucet.
\newblock Piecewise-deterministic markov chain monte carlo.
\newblock {\em arXiv preprint arXiv:1707.05296}, 2017.

\bibitem{wu2017generalized}
Changye Wu and Christian~P Robert.
\newblock Generalized bouncy particle sampler.
\newblock {\em arXiv preprint arXiv:1706.04781}, 2017.

\end{thebibliography}



\ACKNO{I want to express my gratitude to Andreas Eberle for his helpful advice and extensive guidance across the whole course of this project. I also want to thank Nawaf Bou-Rabee for his advice on improving the manuscript. Finally, I would like to thank Benedikt Geiger for his detailed suggestions for improvements of this work.}


\end{document}